\title{Spaces of maps into topological group with the Whitney topology}
\author[T. Banakh]{Taras Banakh}
 \address[T. Banakh]{Instytut Matematyki, Uniwersytet Humanistyczno-Przyrodniczy Jana
Kochanowskiego w Kielcach, Poland, and\newline
Department of Mathematics, 
Ivan Franko National University of Lviv, Lviv, 79000, Ukraine}
 \email{tbanakh@yahoo.com}
\author[K. Mine]{Kotaro Mine}
 \address[K. Mine]{Institute of Mathematics,
 University of Tsukuba, Tsukuba, 305-8571, Japan}
 \email{pen@math.tsukuba.ac.jp}
\author[K. Sakai]{Katsuro Sakai}
 \address[K. Sakai]{Institute of Mathematics,
 University of Tsukuba, Tsukuba, 305-8571, Japan}
 \email{sakaiktr@sakura.cc.tsukuba.ac.jp}
\author[T. Yagasaki]{Tatsuhiko Yagasaki}
\address[T. Yagasaki]{Division of Mathematics,
 Kyoto Institute of Technology, Kyoto, 606-8585, Japan}
\email{yagasaki@kit.ac.jp}
\subjclass[2000]
{46A13, 46T10, 54H11, 57N20, 58D15}
\keywords{Fr{\'e}chet space, LF-space, topological group, 
$l_2$-manifold, ($\IR^\infty\times l_2$)-manifold, function space, the Whitney (graph) topology, the box product, the small box product, end-discrete.} 
\thanks{This work is supported by Grant-in-Aid for Scientific Research (No.19540078).} 
\newcommand{\IR}{\mathbb R}
\newcommand{\IN}{\mathbb N}
\newcommand{\U}{\mathcal U}
\newcommand{\V}{\mathcal V}
\newcommand{\W}{\mathcal W}
\newcommand{\N}{\mathcal N}
\newcommand{\K}{\mathcal K}
\newcommand{\G}{\mathcal G}
\newcommand{\LL}{\mathcal L}
\newcommand{\supp}{\operatorname{supp}}
\newcommand{\cbox}{\boxdot}
\newcommand{\id}{\mathrm{id}}
\newcommand{\cl}{\mathrm{cl}}
\newcommand{\tint}{\mathrm{int}}
\newtheorem{theorem}{Theorem}[section]
\newtheorem{proposition}[theorem]{Proposition}
\newtheorem{lemma}[theorem]{Lemma}
\newtheorem{corollary}[theorem]{Corollary}
\theoremstyle{definition}
\newtheorem{remark}[theorem]{Remark}
\begin{document}

\maketitle

\begin{abstract}
Let $X$ be a locally compact Polish space and
 $G$ a non-discrete Polish ANR group.
By $C(X,G)$, we denote the topological group of
 all continuous maps $f:X\to G$ endowed with the Whitney (graph) topology and
 by $C_c(X,G)$ the subgroup consisting of all maps with compact support. 
It is known that if $X$ is compact and non-discrete
 then the space $C(X,G)$ is an $l_2$-manifold. 
In this article we show that 
 if $X$ is non-compact and not end-discrete
 then $C_c(X,G)$ is an $(\IR^\infty \times l_2)$-manifold,
 and moreover the pair $(C(X,G), C_c(X,G))$ is locally homeomorphic
 to the pair of the box and the small box powers of $l_2$. 
\end{abstract}

\section{Introduction}

This paper is one of studies on local or global topological types of infinite-dimensional topological groups, which appear as function spaces over non-compact spaces (\cite{Ban, BMS, BMSY, BMSY2, BY}). 
For spaces $X$ and $Y$ let $C(X,Y)$ denote the space of all continuous maps $f:X\to Y$ endowed with 
the {\em Whitney} ({\em graph\/}) {\em topology}. 
When $X$ is compact, this topology coincides with the compact-open topology. 
In \cite{Sa1} the third author showed that 
if $X$ is a non-discrete compact metrizable space and 
$Y$ is a Polish ANR without isolated points, then the space $C(X,Y)$ is an $l_2$-manifold, i.e., a paracompact Hausdorff space which is locally homeomorphic to ($\approx_\ell$) the separable Hilbert space $l_2$. 
Here and below, a Polish space means a separable completely metrizable space. 

Suppose $X$ is a paracompact space and 
$G$ is a Hausdorff topological group with the unit element $e$. 
Then, the space $C(X,G)$ is a topological group under the pointwise multiplication. 
Let $C_c(X,G)$ denote the subgroup of $C(X,G)$ consisting of maps $f : X \to G$ with compact support. 
Here, the support of $f \in C(X,G)$ is defined by $\supp(f) = \cl_X \{ x \in X : f(x) \neq e\}$.  
When $X$ is a non-discrete compact metrizable space and 
$G$ is a non-discrete Polish ANR group, 
 from \cite{Sa1} or the famous  
Dobrowolski\,-\,Toru{\'n}czyk's theorem on $l_2$-manifold topological groups, 
it follows that $C(X,G)$ is an $l_2$-manifold.
In this paper we study topological types of the spaces $C(X,G)$ and $C_c(X,G)$ in the case where $X$ is non-compact. The group structure on these spaces is a key ingredient to our arguments. 

A {\em Fr\'echet space} is a completely metrizable
 locally convex topological linear space.
An {\em LF-space} is the direct limit of increasing sequence of
 Fr\'echet spaces in the category of
 (locally convex) topological linear spaces.
In \cite{Man} it is shown that 
 a separable LF-space is homeomorphic to ($\approx$)
 either $l_2$, $\IR^\infty$ or $\IR^\infty \times l_2$, where $\IR^\infty$ is 
 the direct limit of the tower 
$$\IR^1\subset \IR^2\subset \IR^3\subset\cdots.$$
The spaces $\IR^\infty$ and $\IR^\infty \times l_2$ are homeomorphic to
 the countable small box powers $\cbox^\IN \IR$ and $\cbox^\IN l_2$,
 respectively.
The latter spaces are subspaces of
 the box powers $\square^\IN \IR$ and $\square^\IN l_2$.
See Section 2 for the definition of the (small) box powers (or products).
For simplicity, the pair $(\square^\IN l_2, \cbox^\IN l_2)$ is
 denoted by $(\square, \cbox)^\IN l_2$.
We say that a space $X$ is {\em end-discrete}
 if $X\setminus K$ is discrete for some compact subset $K$ of $X$. 
The following are the main results of this paper. 

\begin{theorem}\label{thm_loc-homeo}
Let $X$ be a non-compact locally compact Polish space 
 and $G$ a non-discrete Polish AR (ANR) group.
\begin{enumerate}
\item If $X$ is not end-discrete or $G$ is not locally compact, then 
$$(C(X,G), C_c(X,G)) \approx_{(\ell)} (\square, \cbox)^\IN l_2.$$
\item Suppose $G$ is locally compact. 
\begin{enumerate}
\item If $X$ is end-discrete and non-discrete, then 
$$(C(X,G), C_c(X,G)) \approx_{(\ell)} l_2 \times (\square, \cbox)^\IN \IR.$$
\item If $X$ is discrete (i.e., $X\approx \IN$), then 
$$(C(X,G), C_c(X,G)) \approx_{(\ell)} (\square, \cbox)^\IN \IR.$$
\end{enumerate}
\end{enumerate}
\end{theorem}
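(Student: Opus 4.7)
The plan is to reduce the global statement to a local one via the group structure, decompose $C(X,G)$ locally as a (small) box product indexed by a compact exhaustion of $X$, and then apply the existing classification of function spaces over compact domains to each factor.

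\smallskip

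\textbf{Step 1 (reduction to $e$).} Both $C(X,G)$ and $C_c(X,G)$ are topological groups under pointwise multiplication, and left translation is an autohomeomorphism of the pair. Since the conclusion is purely local, it suffices to exhibit a neighborhood of the constant map $e\in C(X,G)$ whose pair with its trace on $C_c(X,G)$ is homeomorphic to an open subset of the asserted model pair.

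\textbf{Step 2 (box product decomposition).} Choose a compact exhaustion $K_0\subset K_1\subset\cdots$ of $X$ with $K_{n-1}\subset\mathrm{int}(K_n)$, and set $X_n:=\cl(K_n\setminus K_{n-1})$; then $\{X_n\}$ is a locally finite closed cover by compact sets. A basic Whitney neighborhood of $e$ is described by a sequence $\{V_n\}$ of neighborhoods of $e$ in $G$ via $\{f:f(X_n)\subset V_n\ \forall n\}$. The restriction map $f\mapsto (f|_{X_n})_n$ embeds such a neighborhood into the box product $\square_n C(X_n,G)$, and the subgroup $C_c(X,G)$ corresponds exactly to those tuples eventually equal to $e$, i.e.\ to the small box. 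Thus locally
\[(C(X,G),C_c(X,G))\approx_\ell(\square,\cbox)_n\bigl(C(X_n,G),\{\mathrm{pt}\}\bigr).\]

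\textbf{Step 3 (factor identification and case analysis).} Each factor $C(X_n,G)$ is handled by the cited theorem: if $X_n$ is non-discrete, Sakai's result gives $C(X_n,G)\approx_\ell l_2$, and if $X_n$ is discrete with $k_n$ points, then $C(X_n,G)=G^{k_n}$. In case (1), either infinitely many $X_n$ are non-discrete (because $X$ is not end-discrete), or $G$ itself is a non-locally-compact Polish ANR group and hence an $l_2$-manifold by Dobrowolski--Toru\'nczyk; either way every factor contributes a local $l_2$-direction, yielding $(\square,\cbox)^\IN l_2$. In case 2(a), $G$ is a Lie group with $G\approx_\ell\IR^d$ ($d\ge 1$) near $e$; only finitely many $X_n$ are non-discrete, contributing a single $l_2$-factor, while countably many discrete $X_n$ contribute copies of $\IR^d\approx_\ell\IR$, yielding $l_2\times(\square,\cbox)^\IN\IR$. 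Case 2(b) is the specialization where $X\approx\IN$ and all factors are copies of $G\approx_\ell\IR$.

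\textbf{Main obstacle.} The genuinely delicate point is Step 2: the restrictions $f|_{X_n}$ and $f|_{X_{n+1}}$ must coincide on the overlap $X_n\cap X_{n+1}$, so the restriction map embeds $C(X,G)$ into the box product but does not surject onto an open subset. The remedy is a group-theoretic patching: exploiting that $G$ is an ANR (and metrizable as a topological group), one constructs local sections of the restriction $C(X_n\cup X_{n+1},G)\to C(X_n\cap X_{n+1},G)$, and writes each $f$ near $e$ as an ordered product $f=f_1f_2\cdots$ whose factors are supported near the successive $X_n$. This converts the space of compatible tuples into an honest open subset of the box product. The finitely many extra $l_2$-directions introduced at each junction are absorbed using the stability of $(\square,\cbox)^\IN l_2$ under adjunction of finitely many $l_2$-factors per coordinate, a recurring theme in the earlier sections of the paper.
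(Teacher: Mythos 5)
Your overall strategy coincides with the paper's: translate to the identity, decompose a Whitney neighborhood of $\widehat e$ along a compact exhaustion into a box-product pair, and identify the factors via Sakai's theorem and Gleason/Montgomery--Zippin/Dobrowolski--Toru\'nczyk. However, the decisive step is exactly the one you label ``Main obstacle,'' and your proposed remedy is not a proof and would not work as stated. Writing $f$ near $\widehat e$ as an ordered product $f=f_1f_2\cdots$ with $f_n$ supported near $X_n$ gives (at best) a local \emph{section} of the multiplication map $p:\cbox_{i}\G(X_i)\to \G_c$ --- this is what the paper's Proposition~\ref{p_loc-homeo}\,(2) establishes, and it is used there only for openness of $p$, the direct limit property and (local) contractibility --- but such a parametrization is not injective, since consecutive factors are both nontrivial on the overlap $X_n\cap X_{n+1}$. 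Hence it does not ``convert the space of compatible tuples into an honest open subset of the box product.'' Moreover, the closing appeal to ``stability of $(\square,\cbox)^\IN l_2$ under adjunction of finitely many $l_2$-factors per coordinate, a recurring theme in the earlier sections'' refers to a lemma that does not exist in this paper; you would have to formulate precisely what the ``extra directions at each junction'' parametrize and prove the absorption statement, which is essentially the same difficulty all over again.

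The paper's actual mechanism avoids any extra directions: it splits the annuli into two \emph{alternating discrete families} $\LL=(L_{2i})_{i\in\IN}$ and $\K=(L_{2i-1})_{i\in\IN}$, enlarges the even ones to $N_{2i}$ with $L_{2i}\subset \mathrm{Int}\,N_{2i}$, uses the ANR extension property (Lemma~\ref{lem_ANR-ext}) to get continuous extension operators $s_i:\V_i\to C(N_{2i},\mathrm{Fr}_X N_{2i};G,e)$, and then shows that $g\mapsto\big(r_\LL(g),\,\eta(g)^{-1}g\big)$, with $\eta=\lambda_\N\circ s\circ r_\LL$, is a homeomorphism of a Whitney neighborhood of $\widehat e$ onto $\square_i\V_i\times \G_L$; the remainder $\eta(g)^{-1}g$ lands exactly in the maps supported on the odd annuli, giving the exact pair decomposition $(\G,\G_c)\approx_{(\ell)}(\square,\cbox)_i C(L_{2i},G)\times(\square,\cbox)_i\,\G(L_{2i-1})$ of Proposition~\ref{p_loc-homeo}\,(1). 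Note that the alternate factors $\G(L_{2i-1})\approx C(L_{2i-1},\mathrm{Fr}_X L_{2i-1};G,e)$ carry boundary conditions, so you need the \emph{relative} version of Sakai's theorem (Theorem~\ref{thm_l_2}, Corollary~\ref{C(X,G) is a mfd}), not only the absolute statement you cite; and in case (1) the paper chooses the exhaustion so that every annulus has infinite interior, so that every factor is identified with $l_2$ directly, sidestepping your mixed-factor regrouping when $G$ is locally compact but $X$ is not end-discrete. Finally, a small but real slip: $\IR^d\not\approx_\ell\IR$ for $d\ge2$; what you need (and what suffices, by regrouping coordinates) is $(\square,\cbox)^\IN\IR^d\approx(\square,\cbox)^\IN\IR$.
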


\begin{theorem}\label{thm_loc-homeo_2}
Let $X$ be a non-compact locally compact Polish space 
 and $G$ a Hausdorff topological group. 
\begin{itemize}
\item[(1)] If $G$ is (locally) contractible,
 then $C_c(X,G)$ is (locally) contractible. 
\item[(2)] Suppose $G$ is a non-discrete Polish ANR group. 
\begin{itemize}
\item[(i)\,] If $X$ is non-discrete or $G$ is not locally compact, then 
the space $C_c(X,G)$ is an $(\IR^\infty \times l_2)$-manifold.
In addition, if $G$ is an AR, then $C_c(X,G) \approx \IR^\infty \times l_2$. 
\item[(ii)] If $X$ is discrete and $G$ is locally compact, then 
the space $C_c(X,G)$ is an $\IR^\infty$-manifold.
In addition, if $G$ is an AR, then $C_c(X,G) \approx \IR^\infty$. 
\end{itemize}
\end{itemize}
\end{theorem}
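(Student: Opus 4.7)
The plan is to reduce Theorem \ref{thm_loc-homeo_2} to Theorem \ref{thm_loc-homeo} and the classification of contractible infinite-dimensional manifolds, while part (1) is handled directly by a pointwise contraction specific to the group setting. For (1), suppose $G$ is contractible via a continuous map $h : G \times [0,1] \to G$ with $h(g,0) = g$ and $h(g,1) = e$. Using the group operation, replace $h$ by $h'(g,t) = h(g,t) \cdot h(e,t)^{-1}$; the boundary conditions persist and in addition $h'(e,t) = e$ for every $t$. Then $H : C_c(X,G) \times [0,1] \to C_c(X,G)$, $H(f,t)(x) = h'(f(x),t)$, preserves compact support since $\supp H(f,t) \subseteq \supp f$. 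Continuity of $H$ in the Whitney topology is the main technical point: given a basic open neighborhood $\{g : \Gamma_g \subseteq U\}$ of $H(f_0,t_0)$, one uses continuity of $h'$ together with local compactness of $X$ to build the required Whitney neighborhood of $f_0$ and interval around $t_0$. Globally $H(\cdot,1) \equiv e$, so $C_c(X,G)$ is contractible. In the local case the analogous construction, applied to a local contraction of a neighborhood $V$ of $e$, yields a contraction of the basic neighborhood $\{f : f(X) \subseteq V\}$ of the identity, and homogeneity of the topological group $C_c(X,G)$ propagates local contractibility to every point.

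For part (2)(i), Theorem \ref{thm_loc-homeo} supplies the local model. Whenever $G$ is not locally compact or $X$ is not end-discrete, Theorem \ref{thm_loc-homeo}(1) gives $(C(X,G), C_c(X,G)) \approx_\ell (\square, \cbox)^\IN l_2$, so $C_c(X,G)$ is locally homeomorphic to $\cbox^\IN l_2 \approx \IR^\infty \times l_2$. In the remaining subcase, with $G$ locally compact and $X$ non-discrete but end-discrete, Theorem \ref{thm_loc-homeo}(2)(a) yields a local homeomorphism onto $l_2 \times \cbox^\IN \IR \approx l_2 \times \IR^\infty \approx \IR^\infty \times l_2$. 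Either way, $C_c(X,G)$ is an $(\IR^\infty \times l_2)$-manifold. Part (2)(ii) follows analogously from Theorem \ref{thm_loc-homeo}(2)(b), whose local model $\cbox^\IN \IR \approx \IR^\infty$ identifies $C_c(X,G)$ as an $\IR^\infty$-manifold.

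For the concluding ``In addition'' clauses, I invoke the classification theorems asserting that a contractible $(\IR^\infty \times l_2)$-manifold is homeomorphic to $\IR^\infty \times l_2$ and a contractible $\IR^\infty$-manifold is homeomorphic to $\IR^\infty$. An AR group $G$ is contractible, so by part (1), $C_c(X,G)$ is contractible, and the classification delivers the claimed global homeomorphism. The principal obstacle lies in part (1): establishing Whitney continuity of the pointwise contraction and verifying that the modification $h \mapsto h'$ is compatible with the control of supports. Part (2) is then essentially a case analysis transcribing Theorem \ref{thm_loc-homeo}, together with the standard identifications $\cbox^\IN l_2 \approx \IR^\infty \times l_2$ and $\cbox^\IN \IR \approx \IR^\infty$ recalled in the introduction.
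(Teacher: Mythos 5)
Your global contractibility argument in part (1) is a genuinely different and workable route: the pointwise homotopy $H(f,t)(x)=h'(f(x),t)$ with the normalization $h'(e,t)=e$ is indeed continuous on $C_c(X,G)\times[0,1]$, the key points being compactness of $\supp f_0$ and of $[0,1]$ (local compactness of $X$ is not really what saves you), whereas the paper instead pushes a (local) contraction of $\cbox_{i\in\IN}\G(X_i)$ (Lemma~\ref{lem_LC-deform} plus Proposition~\ref{p_LC}) through the (local) section of the multiplication map $p$ from Proposition~\ref{p_loc-homeo}(2). However, your \emph{local} case is flawed as stated: sets of the form $\{f: f(X)\subseteq V\}$ are \emph{not} basic neighborhoods of $\widehat e$ in the Whitney topology --- a basic neighborhood $\langle\U\rangle$ may have fibers over $x$ shrinking as $x$ leaves compacta and need not contain any uniform tube, even after intersecting with $C_c(X,G)$ --- so contracting one tube inside a bigger tube only shows that \emph{some} neighborhood of each point is contractible in $C_c(X,G)$, not local contractibility in the standard (and the paper's, indeed strong) sense of contracting arbitrarily small neighborhoods inside prescribed ones. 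The idea is repairable: given $\langle\U\rangle\ni\widehat e$, use $h'(\{e\}\times[0,1])=\{e\}$ and compactness of $[0,1]$ to choose for each $x$ sets $A_x\times C_x\subseteq\U$ and a neighborhood $B_x$ of $e$ with $h'(B_x\times[0,1])\subseteq C_x$, and then contract $\langle\bigcup_x A_x\times B_x\rangle\cap C_c(X,G)$ within $\langle\U\rangle\cap C_c(X,G)$, finishing by homogeneity; but this refinement is exactly what your sketch omits.

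In part (2) there are two further gaps. First, passing from ``locally homeomorphic to $\IR^\infty\times l_2$ (resp.\ $\IR^\infty$)'' to ``is a manifold'' silently assumes paracompactness, which is part of the paper's definition of manifold and is genuinely nontrivial here, since the topology is of box-product type (full box products need not even be normal); the paper settles it via the LF-space structure of $C_c(X,N)$ for a Banach space $N\supseteq G$ (Corollary~\ref{it is LF-space}) together with van Douwen's theorem, yielding Proposition~\ref{prop_paracompact}, and your proposal never addresses it. Second, for the ``In addition'' clauses you appeal to a classification theorem saying that contractible $(\IR^\infty\times l_2)$-manifolds and $\IR^\infty$-manifolds are homeomorphic to their models; this is a substantial external result that the paper neither states nor cites (for $(\IR^\infty\times l_2)$-manifolds it is not classical), and it is unnecessary: when $G$ is an AR, Theorem~\ref{thm_loc-homeo} already provides the \emph{global} homeomorphism of pairs (the unparenthesized alternative of $\approx_{(\ell)}$), whence $C_c(X,G)\approx\cbox^\IN l_2\approx\IR^\infty\times l_2$, resp.\ $C_c(X,G)\approx\cbox^\IN\IR\approx\IR^\infty$, with no appeal to part (1) or to any classification. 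Apart from these points, your case analysis transcribing Theorem~\ref{thm_loc-homeo} in (2) is correct.
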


%%%%%%%%%%%%%%

\section{The box topology and topological groups}

In this preliminary section we recall basic facts on the box topology and topological groups (cf.\ \cite[Section 2]{BMSY}, \cite{GZ, Wil1}). 
The {\em box product} \ $\square_{i\in\IN} X_i$
	of a sequence of topological spaces $(X_i)_{i\in\IN}$
	is the countable product $\prod_{i\in\IN}X_i$
	endowed with the box topology. 
	This topology is generated
	by the base consisting of boxes $\prod_{i\in\IN}U_i$,
	where $U_i$ is an open subset in $X_i$.
The {\em small box product} \ $\cbox_{i\in\IN} X_i$ of
 a sequence of pointed spaces $(X_i,*_i)_{i\in\IN}$
 is the subspace of $\square_{i\in\IN} X_i$ defined by
$$\cbox_{i\in\IN} X_i = \big\{\, (x_i)_{i\in\IN}\in\square_{i\in\IN} X_i :
	 \exists \,n\in\IN, \;\forall i\ge n,\  x_i=*_i \,\big\}.$$
For notational simplicity,  the pair $(\square_{i\in\IN} X_i, \cbox_{i\in\IN} X_i)$ is denoted by the symbol $(\square, \cbox)_{i\in\IN} X_i$. 
In case $X_i = X$ for every $i \in \IN$,
 $\square_{i\in\IN} X_i$ and $\cbox_{i\in\IN} X_i$
 are respectively denoted by $\square^\IN X$ and $\cbox^\IN X$
 and called the {\em box power} and the {\em small box power},
 and moreover $(\square, \cbox)_{i\in\IN} X_i$ is denoted by $(\square, \cbox)^\IN X$.

Suppose $G$ is a topological group with the unit element $e \in G$. 
We always choose this unit element $e$ as the distinguished point of $G$ and its subgroups. 
A tower of closed subgroups in $G$ is a sequence $(G_i)_{i\in\IN}$ of closed subgroups of $G$ such that
\[ G_1\subset G_2\subset G_3 \subset \cdots \ \ \text{ and } \ \ 
 G = \bigcup_{i \in \IN} G_i. \]
This tower yields a box product pair $(\square, \cbox)_{i\in\IN} G_i$ and 
the multiplication map 
\[ p :\cbox_{i\in\IN} G_i\to G, \ \
	 p(x_1, x_2, \dots,x_i) = x_1x_2 \cdots x_i. \]
Note that $(\square, \cbox)_{i\in\IN} G_i$ is a pair of a topological group and its subgroup with the unit element $\boldsymbol{e} = (e, e, \dots)$ and that  
the multiplication map $p$ is continuous (\cite[Lemma 2.10]{BMSY}). 

We say that (i) a map $f : X \to Y$ is {\em open at a point} $x \in X$ if
for any neighborhood $U$ of $x$ in $X$
 the image $f(U)$ is a neighborhood of $f(x)$ in $Y$ and 
 (ii) $f : X \to Y$ has a local section at $y \in Y$ if there exist a neighborhood $V$ of $y$ in $Y$ and a map $s : V \to X$ such that $fs = \id_V$. 

\begin{remark}\label{rem_open}
(1) The map $p$ is open iff $p$ is open at $\boldsymbol{e}$. 

(2) If the map $p$ has a local section at $e$, then $p$ is open. 
\end{remark}

We say that a topological group $G$ is {\em the direct limit}
 of the tower $(G_i)_{i\in\IN}$ in the category of topological groups
 if  any group homomorphism $h:G\to H$ to an arbitrary topological group $H$
 is continuous whenever its restrictions $h|_{G_i}$ are continuous for all $i\in\IN$.
Note that $G$ is the direct limit of $(G_i)_{i\in\IN}$ if and only if  
$G$ carries the strongest group topology
 inducing the original topology on each group $G_i$. 
This description of the direct limit topology has the following consequence (cf. \cite[Proposition 2.7]{BMSY2}, \cite[Theorem 2.2]{BR}):

\begin{proposition}\label{open then direct limit}
If the map $p :\cbox_{i\in\IN} G_i\to G$ is open,
 then $G$ is the direct limit of $(G_i)_{i\in\IN}$
 in the category of topological groups.
\end{proposition}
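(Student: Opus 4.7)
The plan is to use the characterization of the direct limit topology stated just above the proposition: $G$ is the direct limit of $(G_i)_{i\in\IN}$ in the category of topological groups if and only if every group homomorphism $h:G\to H$ into a topological group $H$ whose restrictions $h|_{G_i}$ are all continuous is itself continuous. So I fix such a homomorphism $h$ and aim to prove that $h$ is continuous. My strategy has two parts: first show that the composite $h\circ p:\cbox_{i\in\IN}G_i\to H$ is continuous, then transfer continuity from $h\circ p$ to $h$ using that $p$ is an open continuous surjection.

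For the first part, since $h\circ p$ is a homomorphism between topological groups, continuity is equivalent to continuity at $\boldsymbol{e}=(e,e,\dots)$. Given a neighborhood $V$ of $e_H$ in $H$, I would recursively choose symmetric neighborhoods $V=V^{(0)}\supset V^{(1)}\supset V^{(2)}\supset\cdots$ of $e_H$ satisfying $(V^{(k)})^{2}\subset V^{(k-1)}$; a short telescoping argument then gives $V^{(1)}V^{(2)}\cdots V^{(n)}\subset V$ for every $n$. Using continuity of each $h|_{G_i}$, set $U_i:=(h|_{G_i})^{-1}(V^{(i)})$, so that the box $U:=\prod_{i\in\IN}U_i$ is a neighborhood of $\boldsymbol{e}$ in $\cbox_{i\in\IN}G_i$. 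For any $(x_i)\in U$ only finitely many $x_i$ differ from $e$, whence $(h\circ p)((x_i))=h(x_1)h(x_2)\cdots h(x_n)\in V^{(1)}V^{(2)}\cdots V^{(n)}\subset V$, yielding continuity of $h\circ p$ at $\boldsymbol{e}$.

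For the second part, I would observe that $p$ is surjective: its image is a subgroup of $G$ containing every $G_i$ and hence all of $G=\bigcup_i G_i$. Since $p$ is by hypothesis open and surjective and $h\circ p$ is continuous, the standard descent argument applies: for any open $W\subset H$ the set $h^{-1}(W)=p\bigl((h\circ p)^{-1}(W)\bigr)$ is open in $G$, so $h$ is continuous, completing the proof.

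The main delicate point is the construction of the nested neighborhoods $V^{(i)}$. The key feature of the small box product that forces this (as opposed to the coarser topology carried by the topological direct limit $\varinjlim G_i$) is that a basic neighborhood of $\boldsymbol{e}$ is a full product of neighborhoods, one in each factor, and elements of $\cbox_{i\in\IN}G_i$ can have arbitrarily many nontrivial coordinates, so one must control all partial products $h(x_1)\cdots h(x_n)$ simultaneously; controlling each factor separately would not suffice.
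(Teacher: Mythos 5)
There is a genuine flaw in the bridge between your two parts: the multiplication map $p$ is \emph{not} a group homomorphism unless the groups commute, since $p\bigl((x_i)(y_i)\bigr)=x_1y_1x_2y_2\cdots$ while $p\bigl((x_i)\bigr)p\bigl((y_i)\bigr)=x_1x_2\cdots y_1y_2\cdots$, and these differ for nonabelian $G$ (which is the relevant case here, e.g.\ $G=C_c(X,G')$). Hence $h\circ p$ need not be a homomorphism, so you cannot conclude its global continuity from continuity at $\boldsymbol{e}$, and your final descent step $h^{-1}(W)=p\bigl((h\circ p)^{-1}(W)\bigr)$ needs $h\circ p$ continuous everywhere, which you have not established. (It is in fact true, e.g.\ because $h\circ p$ factors as the continuous coordinatewise map $\cbox_{i\in\IN}G_i\to\cbox^{\IN}H$, $(x_i)\mapsto(h(x_i))$, followed by the continuous multiplication map of $\cbox^{\IN}H$, but that is an argument you would have to supply.)

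The good news is that your neighborhood construction already contains the whole proof once you reorganize: $h$ itself \emph{is} a homomorphism of groups, so it suffices to prove that $h$ is continuous at $e\in G$. Given a neighborhood $V$ of $e_H$, your nested symmetric neighborhoods $V^{(i)}$ and the sets $U_i=(h|_{G_i})^{-1}(V^{(i)})$ give an open neighborhood $U=\prod_{i\in\IN}U_i\cap\cbox_{i\in\IN}G_i$ of $\boldsymbol{e}$ with $h(p(U))\subset V$; since $p$ is open and $p(\boldsymbol{e})=e$, the set $p(U)$ is a neighborhood of $e$ in $G$ with $h(p(U))\subset V$, so $h$ is continuous at $e$ and hence continuous. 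This removes both the false homomorphism claim and the need for surjectivity-plus-global-continuity descent (surjectivity of $p$ is anyway immediate, as each $G_i$ lies in the image and $G=\bigcup_iG_i$). Note also that the paper itself gives no proof of this proposition, only citations, so the comparison here is purely about the internal correctness of your argument.
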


We conclude this section with some remarks on the (local) contractibility of the small box products. 
A space $X$ is called (i) {\em strongly locally contractible} at $x \in X$
 if every neighborhood $U$ of $x$ contains
 a neighborhood $V$ of $x$ which contracts to the point $x$ in $U$ with keeping $x$ fixed 
(i.e.,  there is a contraction $h_t : V \to U$ $(t \in [0,1]$) of $V$ with $h_t(x) = x$), and 
(ii) {\em strongly contractible} at $x \in X$ if 
$X$ contracts to $x$ in itself with keeping $x$ fixed. 
Note that if a topological group $G$
 is (locally) contractible 
 then $G$ is strongly (locally) contractible at every $x \in G$ (\cite[Remark 2.9]{BMSY}). 
 
\begin{proposition}[{\rm\cite[Proposition 2.8]{BMSY}}]\label{p_LC} 
The small box product\/ $\cbox_{i \in \IN} (X_i, x_i)$ is strongly (locally) contractible at the base point $\boldsymbol{x} = (x_i)_{i\in\IN}$ if and only if each space $X_i$ is strongly (locally) contractible at the base point $x_i$. 
\end{proposition}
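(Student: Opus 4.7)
The plan is to prove both implications separately, reducing each to the single-factor case via coordinate-wise constructions. For the ``only if'' direction, I would observe that for each $i\in\IN$ the injection $\iota_i\colon X_i\to\cbox_j X_j$ sending $y$ to the sequence with $y$ in the $i$-th coordinate and $x_j$ elsewhere is continuous (its components are either $\id$ or constant), and that the projection $\pi_i\colon\cbox_j X_j\to X_i$ is continuous for the (small) box topology. Since $\pi_i\circ\iota_i=\id_{X_i}$, any strong contraction $H$ of $\cbox_j X_j$ at $\boldsymbol{x}$ induces the strong contraction $\pi_i\circ H\circ(\iota_i\times\id)$ of $X_i$ at $x_i$; for the local version the same construction applies after intersecting a given neighborhood $U_i$ of $x_i$ with the preimage $\pi_i^{-1}(U_i)\cap\cbox_j X_j$ and applying strong local contractibility of $\cbox_j X_j$ inside this open set.

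For the ``if'' direction, given strong (local) contractions $h^i\colon V_i\times[0,1]\to U_i$ of each $X_i$ at $x_i$ (with $V_i=U_i=X_i$ in the global case), I would define the candidate contraction coordinatewise,
\[ H\colon \boldsymbol{V}\times[0,1]\to\boldsymbol{U},\qquad H(\boldsymbol{y},t)_i=h^i_t(y_i),\]
where $\boldsymbol{V}=\prod_i V_i\cap\cbox_i X_i$ and $\boldsymbol{U}=\prod_i U_i\cap\cbox_i X_i$. That $H$ actually takes values in $\cbox_i X_i$ follows immediately from $h^i_t(x_i)=x_i$: if $y_i=x_i$ for $i\ge n$ then $h^i_t(y_i)=x_i$ for $i\ge n$. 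The conditions $H_0=\id$, $H_1\equiv\boldsymbol{x}$, and $H_t(\boldsymbol{x})=\boldsymbol{x}$ hold coordinatewise.

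The main obstacle, as always with the box topology, is verifying continuity of $H$, since we must control infinitely many coordinates simultaneously. Fix $(\boldsymbol{y}^{0},t_0)\in\boldsymbol{V}\times[0,1]$ with $y^{0}_i=x_i$ for $i\ge n_0$, and a basic neighborhood $\prod_i W_i\cap\cbox_i X_i$ of $H(\boldsymbol{y}^{0},t_0)$. For the finitely many indices $i<n_0$, continuity of $h^i$ at $(y^{0}_i,t_0)$ yields open $A_i\ni y^{0}_i$ and open $B_i\ni t_0$ with $h^i(A_i\times B_i)\subset W_i$, and the finite intersection $B=\bigcap_{i<n_0}B_i$ is still an open neighborhood of $t_0$. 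The essential point is that for $i\ge n_0$ we have $h^i(\{x_i\}\times[0,1])=\{x_i\}\subset W_i$; applying the tube lemma (compactness of $[0,1]$) to the continuous map $h^i$ produces an open neighborhood $A_i$ of $x_i$ with $h^i(A_i\times[0,1])\subset W_i$, so no further restriction of the time variable is needed for these indices. Setting $\boldsymbol{A}=\prod_i A_i\cap\cbox_i X_i$, one checks $H(\boldsymbol{A}\times B)\subset\prod_i W_i\cap\cbox_i X_i$ coordinate by coordinate, which gives continuity of $H$ at $(\boldsymbol{y}^{0},t_0)$ and completes the argument. Thus the crux of the proof is precisely the interplay between the small box condition (which bounds the ``nontrivial'' coordinates of $\boldsymbol{y}^{0}$) and the tube lemma (which absorbs the time variable on the tail).
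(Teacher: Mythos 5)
Your proof is correct, and since the paper does not prove Proposition~\ref{p_LC} itself (it is quoted from \cite[Proposition 2.8]{BMSY}), there is no in-paper argument to compare against; your coordinatewise homotopy, with the tail coordinates controlled by the basepoint-fixing condition $h^i_t(x_i)=x_i$ together with the tube lemma, is exactly the standard proof. One small point of rigor: for the ``only if'' direction, continuity of $\iota_i$ into the box product does not follow merely from continuity of its components (coordinatewise continuity is insufficient for the box topology); here it holds because $\iota_i^{-1}\bigl(\prod_j W_j\bigr)$ is either $W_i$ or empty, and similarly in the ``if'' direction you should note that it suffices to treat basic box neighborhoods $\prod_i U_i\cap\cbox_{i\in\IN}X_i$ of $\boldsymbol{x}$, since these form a neighborhood base.
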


%%%%%%%%%%%%%%%%%%%%%

\section{Topological group $C(X,G)$} 

\subsection{The graph topology}

For spaces $X$ and $Y$ let $C(X, Y)$ denote the space of continuous maps $f : X \to Y$ endowed with the {\em Whitney topology} (or the {\em graph topology}). 
For $f \in C(X,Y)$ the symbol $\Gamma_f$ stands for the graph of $f$ (i.e.,  
$\Gamma_f = \{\, (x,f(x)) \in X \times Y : x\in X\,\})$.
The graph topology on $C(X,Y)$ is generated by the base 
	$$\langle \U \rangle = \{\, f\in C(X,Y): \Gamma_f \subset \U\,\} \hspace{5mm} 
 	\text{($\U$ runs over the open sets in $X\times Y$).}$$ 
For pairs of spaces $(X, A)$ and $(Y, B)$ let 
$C(X,A; Y, B)$ denote the subspace of $C(X, Y)$ defined by 
$$C(X,A; Y, B) = \{\, f \in C(X, Y) : f(A) \subset B \,\}.$$ 
For a point $y \in Y$ let $\widehat{y}=\widehat{y}_X \in C(X, Y)$ denote the constant map onto $y$. 

%%%%%%%%%%%%%%%%%%%%%

Suppose $G$ is a topological group with the unit element $e$. 
Then, the space $C(X,G)$ has the canonical group structure defined by the pointwise multiplication 
$$(fg)(x)=f(x)g(x) \ \ (f,g\in C(X,G), \ x \in X).$$
Then, the constant map $\widehat e$ is the unit element of $C(X,G)$.
The inverse $f^{-1}$ of $f \in C(X, G)$ is defined by 
\[ f^{-1}(x) = \big(f(x)\big)^{-1} \ \ (x \in X). \] 
The support of $f \in C(X,G)$ is defined by 
$\supp(f) = \cl_X \{ x \in X : f(x) \neq e\}$.  We obtain a subgroup 
$$C_c(X,G) = \{ f \in C(X,G) : \supp(f) \ \text{is compact}\}.$$ 

Now we shall show that when $X$ is paracompact, $\G = C(X,G)$ is a topological group with respect to the graph topology.   
First we see that the inverse operator and 
the left/right multiplications are always continuous. 

\begin{lemma} \label{lem_inv-lrm} 
The inverse operator $\theta : \G \to \G$, $\theta(g) = g^{-1}$ and 
the left/right multiplications $L_f, R_f : \G \to \G$, $L_f(g) = fg$, $R_f(g) = gf$
are continuous. 
\end{lemma}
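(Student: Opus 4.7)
The plan is to prove each of the three continuity assertions by exhibiting a homeomorphism of the product $X\times G$ that intertwines the given operation on $\G$ with the identity on graphs. Since the graph topology is generated by the subbasic (in fact basic) sets $\langle \U\rangle$, it suffices to show that the preimage of each $\langle \U\rangle$ under the operation is again of the form $\langle \V\rangle$ for some open $\V\subset X\times G$.

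For the inverse operator $\theta$, I would define
$$\phi:X\times G\to X\times G,\qquad \phi(x,y)=(x,y^{-1}).$$
Because inversion in $G$ is continuous, $\phi$ is a homeomorphism (it is its own inverse). A direct check shows that for any $g\in\G$ one has $\Gamma_{g^{-1}}\subset \U$ if and only if $\Gamma_g\subset \phi^{-1}(\U)$, so $\theta^{-1}(\langle \U\rangle)=\langle \phi^{-1}(\U)\rangle$, which is open. Hence $\theta$ is continuous (in fact a homeomorphism of $\G$).

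For the left multiplication $L_f$ by a fixed $f\in\G$, I would use the map
$$\psi_f:X\times G\to X\times G,\qquad \psi_f(x,y)=(x,f(x)y).$$
This is continuous because $(x,y)\mapsto f(x)$ is continuous and multiplication in $G$ is jointly continuous; its inverse $(x,y)\mapsto (x,f(x)^{-1}y)$ is continuous for the same reason, so $\psi_f$ is a homeomorphism of $X\times G$. The identity $\Gamma_{fg}\subset \U\iff \Gamma_g\subset \psi_f^{-1}(\U)$ gives $L_f^{-1}(\langle \U\rangle)=\langle \psi_f^{-1}(\U)\rangle$, proving continuity of $L_f$. The right multiplication $R_f$ is handled identically with $(x,y)\mapsto (x,yf(x))$.

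I do not anticipate any serious obstacle: the argument is a direct unwinding of the definition of the graph topology together with the continuity of the group operations of $G$ and of the single fixed map $f$. In particular, paracompactness of $X$ is not needed here; it will only be required later to establish the joint continuity of the multiplication $\G\times\G\to\G$, where a partition-of-unity argument is necessary to promote pointwise control of graphs to control on a whole neighborhood of a pair $(f,g)$.
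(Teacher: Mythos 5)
Your proof is correct and follows essentially the same route as the paper: both use the homeomorphisms $(x,y)\mapsto(x,y^{-1})$, $(x,y)\mapsto(x,f(x)y)$, $(x,y)\mapsto(x,yf(x))$ of $X\times G$ and the fact that they carry graphs to graphs, so that preimages of basic sets $\langle\U\rangle$ are again basic sets. The only cosmetic difference is that the paper phrases this via images $\langle\Theta(\U)\rangle$ etc.\ rather than preimages, which is equivalent since the maps are homeomorphisms; your closing remark that paracompactness is only needed for joint continuity of multiplication also matches the paper.
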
 

\begin{proof} 
Since $G$ is a topological group, we have the homeomorphisms: 
\begin{itemize}
\item[(i)\,] $\Theta : X \times G \approx X \times G$, \ \ $\Theta(x,y) = (x, y^{-1})$
\item[(ii)] $\Phi_f, \Phi^f : X \times G \approx X \times G$, \ \ $\Phi_f(x,y) = (x, f(x)y)$, \ \ $\Phi^f(x,y) = (x, yf(x))$. 
\end{itemize} 
Note that $\Theta(\Gamma_g) = \Gamma_{g^{-1}}$, $\Phi_f(\Gamma_g) = \Gamma_{fg}$ and $\Phi^f(\Gamma_g)=\Gamma_{gf}$ for any $g\in C(X,G)$.
Therefore, if $g \in \langle \U \rangle$, then $g^{-1} \in \langle \Theta(\U) \rangle$ and $fg \in \langle \Phi_f(\U) \rangle$, 
$gf \in \langle \Phi^f(\U) \rangle$. This implies the assertions. 
\end{proof} 

\begin{proposition} \label{to be top group} 
For every paracompact space $X$, the group $C(X,G)$ is a topological group with respect to the graph topology. 
\end{proposition}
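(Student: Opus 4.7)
The plan is to combine Lemma~\ref{lem_inv-lrm} with a paracompactness argument to show the multiplication $\mu \colon C(X,G) \times C(X,G) \to C(X,G)$ is continuous at the identity $(\widehat e, \widehat e)$, then bootstrap continuity to every point via the already-known continuity of left and right translations.

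Since Lemma~\ref{lem_inv-lrm} supplies continuity of inversion and of each $L_{f_0}, R_{g_0}$, a standard argument reduces the whole problem to continuity at $(\widehat e, \widehat e)$. Given $(f_0, g_0)$ and an open neighborhood $\W$ of $f_0 g_0$, the set $f_0^{-1} \W g_0^{-1}$ is an open neighborhood of $\widehat e$ (image of $\W$ under the homeomorphism $L_{f_0^{-1}} \circ R_{g_0^{-1}}$); continuity of $\mu$ at the unit then yields a neighborhood $\U_0$ of $\widehat e$ with $\U_0 \cdot \U_0 \subset f_0^{-1} \W g_0^{-1}$, whence $(f_0 \U_0) \cdot (\U_0 g_0) \subset \W$ as required.

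For the key step, fix a basic neighborhood $\langle W \rangle$ of $\widehat e$ with $W$ open in $X \times G$ containing $X \times \{e\}$. Using continuity of multiplication in $G$, for each $x \in X$ choose open neighborhoods $P_x \ni x$ in $X$ and $V_x \ni e$ in $G$ with $P_x \times V_x V_x \subset W$. By paracompactness of $X$, take a locally finite open refinement $\{P_\alpha\}_{\alpha \in A}$ of $\{P_x\}_x$, pick $x_\alpha$ with $P_\alpha \subset P_{x_\alpha}$, and put $V_\alpha := V_{x_\alpha}$. Since paracompact Hausdorff spaces are normal, shrink to an open cover $\{Q_\alpha\}$ with $\overline{Q_\alpha} \subset P_\alpha$. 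I then define
\[
U := \bigl\{(x, y) \in X \times G : y \in V_\alpha \text{ for every } \alpha \text{ with } x \in \overline{Q_\alpha}\bigr\}.
\]
Openness of $U$ follows from local finiteness of $\{\overline{Q_\alpha}\}$: at any $(x_0, y_0) \in U$, only finitely many indices are relevant on a small neighborhood $N$ of $x_0$; shrinking $N$ to exclude the finitely many $\overline{Q_\alpha}$ not containing $x_0$ and taking the finite intersection $\bigcap_{\alpha : x_0 \in \overline{Q_\alpha}} V_\alpha$ about $y_0$ yields a product neighborhood of $(x_0, y_0)$ inside $U$. Clearly $\Gamma_{\widehat e} \subset U$. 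Finally, if $f, g \in \langle U \rangle$ and $x \in X$, pick any $\alpha$ with $x \in Q_\alpha$; then $f(x), g(x) \in V_\alpha$, so
\[
(x, f(x)g(x)) \in P_\alpha \times V_\alpha V_\alpha \subset P_{x_\alpha} \times V_{x_\alpha} V_{x_\alpha} \subset W,
\]
i.e.\ $fg \in \langle W \rangle$.

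The main obstacle I expect is precisely this definition of $U$. The naive choice $U = \bigcup_\alpha (P_\alpha \times V_\alpha)$ fails: $\Gamma_f \subset U$ then only yields some $\alpha$ (possibly depending on $x$) with $f(x) \in V_\alpha$, rather than an $\alpha$ that works simultaneously for $g$ at the same $x$. Replacing the $P_\alpha$ by the closed shrinkings $\overline{Q_\alpha}$ and quantifying \emph{universally} over them is the device that forces $f(x)$ and $g(x)$ to land in a common $V_\alpha$ while still keeping $U$ open, thanks to local finiteness.
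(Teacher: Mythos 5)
Your proof is correct and follows essentially the same route as the paper: reduce continuity of multiplication to the point $(\widehat e,\widehat e)$ via the translations of Lemma~\ref{lem_inv-lrm}, then use paracompactness to produce a neighborhood of $\Gamma_{\widehat e}$ whose ``square'' lies in the given $\langle W\rangle$, with local finiteness letting you take finite intersections of the group-neighborhood factors. The only (harmless) cosmetic difference is that you pass to a closed shrinking $\{\overline{Q_\alpha}\}$ (using normality of paracompact Hausdorff spaces), whereas the paper avoids shrinking by intersecting, for each point $y$, the sets $W_x$ over the finitely many refinement members meeting a small neighborhood $O_y$ of $y$.
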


By Lemma~\ref{lem_inv-lrm} the continuity of the multiplication of $C(X,G)$ reduces to its continuity at $(\widehat e, \widehat e)$. The latter is equivalent to the assertion that each neighborhood $\langle \U \rangle$ of $\widehat e$ admits 
a smaller neighborhood $\langle \V \rangle$ of $\widehat e$ with 
$\langle \V \rangle \langle \V \rangle \subset \langle \U \rangle$. 
For $\U, \V \subset X\times G$, we set \ 
$\U\V =\{\, (x,yz)\in X\times G : (x,y)\in\U, (x,z) \in \V \, \}.$ \ 
Note that $\Gamma_{fg} = \Gamma_f \Gamma_g $ and $\langle \U \rangle \langle \V \rangle \subset \langle \U\V\rangle$.  
Therefore, Proposition~\ref{to be top group} follows from the next lemma.  
	
\begin{lemma} \label{multiplicative operator at e}
If $X$ is paracompact, then for any neighborhood $\U$ of $\Gamma_{\widehat e}$  
there exists a neighborhood $\V$ of $\Gamma_{\widehat e}$ such that $\V\V \subset \U$.
\end{lemma}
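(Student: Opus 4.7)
My plan is to construct $\V$ by combining local data (from openness of $\U$ and continuity of multiplication in $G$ at $e$) with a locally finite cover of $X$ produced from paracompactness. The naive attempt $\V = \bigcup_\alpha A_\alpha \times W_{x(\alpha)}$ fails because two points $(a,y), (a,z) \in \V$ may come from boxes indexed by different $\alpha, \beta$, and then there is no relation between $W_{x(\alpha)} W_{x(\beta)}$ and any fiber of $\U$ over $a$. The fix is a careful combinatorial construction ensuring that the local index data ``agrees'' at any common point.

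First, for each $x \in X$, I use openness of $\U$ to pick an open $U_x \ni x$ in $X$ and an open $W_x \ni e$ in $G$ with $U_x \times W_x \subset \U$; then I shrink $W_x$ using continuity of multiplication at $e \in G$ so that $W_x W_x$ is still contained in the $G$-fiber, giving $U_x \times W_x W_x \subset \U$. By paracompactness, I pick a locally finite open refinement $\{A_\alpha\}_{\alpha \in \Lambda}$ of $\{U_x\}_{x \in X}$, fix $x(\alpha)$ with $A_\alpha \subset U_{x(\alpha)}$, and invoke the shrinking lemma (paracompact Hausdorff spaces are normal) to obtain a closed cover $\{B_\alpha\}$ with $B_\alpha \subset A_\alpha$; the family $\{B_\alpha\}$ remains locally finite.

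Second, for each $x \in X$, I set the non-empty finite ``signature'' $G(x) = \{\alpha : x \in B_\alpha\}$ and define
\[ V(x) = \Bigl(\bigcap_{\alpha \in G(x)} A_\alpha\Bigr) \setminus \bigcup_{\alpha \notin G(x)} B_\alpha, \qquad W(x) = \bigcap_{\alpha \in G(x)} W_{x(\alpha)}. \]
The set $V(x)$ is open (local finiteness of $\{B_\alpha\}$ makes the subtracted set closed), contains $x$, and satisfies the crucial monotonicity $G(x') \subset G(x)$ for every $x' \in V(x)$. I then put $\V = \bigcup_{x \in X} V(x) \times W(x)$, which is open and contains $\Gamma_{\widehat e} = X \times \{e\}$. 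To verify $\V \V \subset \U$, given $(a, y), (a, z) \in \V$, I choose $x_1, x_2$ with $(a, y) \in V(x_1) \times W(x_1)$ and $(a, z) \in V(x_2) \times W(x_2)$; monotonicity gives $G(a) \subset G(x_1) \cap G(x_2)$, and picking any $\alpha \in G(a)$ (non-empty since $\{B_\alpha\}$ covers $X$) yields $y, z \in W_{x(\alpha)}$, hence $yz \in W_{x(\alpha)} W_{x(\alpha)}$. Since $a \in B_\alpha \subset U_{x(\alpha)}$, we conclude $(a, yz) \in U_{x(\alpha)} \times W_{x(\alpha)} W_{x(\alpha)} \subset \U$.

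The main obstacle is precisely this combinatorial bookkeeping: each point $a$ must be assigned an unambiguous finite index set $G(a)$ that decreases along inclusion of the $V(x)$'s, which forces the dual use of open $A_\alpha$'s (to place $a$ inside the relevant boxes) and closed $B_\alpha$'s (to exclude spurious indices). Local finiteness is used twice—once to ensure $G(x)$ is finite (so $W(x)$ is still open), and once to guarantee that $\bigcup_{\alpha \notin G(x)} B_\alpha$ is closed (so $V(x)$ is open). The continuity-of-multiplication and openness-of-$\U$ ingredients are standard; the paracompactness of $X$ enters only through these two uses.
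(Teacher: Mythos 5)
Your proof is correct, and its skeleton is the paper's: the local step ($U_x\times W_xW_x\subset\U$ from openness of $\U$ and continuity of multiplication at $e$), a locally finite refinement, and a final $\V$ that is a union of boxes over $X$ whose $G$-coordinates are finite intersections of the $W_x$'s. Where you genuinely diverge is in how two boxes sitting over a common point are forced to share an index. You invoke the shrinking lemma to get a closed cover $\{B_\alpha\}$ with $B_\alpha\subset A_\alpha$ and then run the signature argument ($G(x')\subset G(x)$ for $x'\in V(x)$), which uses normality of $X$ (automatic for Hausdorff paracompact spaces, so no real loss, but it is an extra ingredient). The paper gets the same effect more cheaply and without any closed shrinking: it takes a locally finite open refinement $\{V_x\}_{x\in X}$ with $V_x\subset U_x$, and for each $y\in X$ a neighborhood $O_y$ meeting only finitely many $V_x$, setting $\widetilde W_y=\bigcap\{W_x:O_y\cap V_x\neq\emptyset\}$ and $\V=\bigcup_{y}O_y\times\widetilde W_y$; then for $(z,a)\in O_y\times\widetilde W_y$ and $(z,b)\in O_{y'}\times\widetilde W_{y'}$, any member $V_x$ of the refinement containing $z$ meets both $O_y$ and $O_{y'}$, so $\widetilde W_y,\widetilde W_{y'}\subset W_x$ and $(z,ab)\in U_x\times W_xW_x\subset\U$. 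In short, your signature bookkeeping buys boxes whose $G$-part is intersected only over indices actually containing the point, while the paper intersects over all refinement members meeting $O_y$; both verifications are complete, and both use paracompactness in exactly the two roles you identify (finiteness of the index set and control of which $W_x$'s get intersected).
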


\begin{proof} 
For each $x \in X$ there exist open neighborhoods $U_x$ of $x$ in $X$ and $W_x$ of $e$ in $G$ such that $U_x \times W_x W_x \subset \U$. 
Since $X$ is paracompact, the open covering $\{ U_x \}_{x \in X}$ admits a locally finite open refinement $\{ V_x \}_{x \in X}$ with $V_x \subset U_x$ $(x \in X)$. 
Each $y \in X$ has an open neighborhood $O_y$ which meets at most finitely many $V_x$'s. Then $\widetilde W_y = \bigcap \{ W_x : O_y \cap V_x \neq \emptyset\}$ is an open neighborhood of $e$ in $G$. Finally, we define 
$$\V = \bigcup_{y \in X} O_y \times \widetilde W_y.$$

It remains to show that $\V\V \subset \U$. Given any $(z, a), (z, b) \in \V$. 
There exist $y, y', x \in X$ such that 
$(z, a) \in O_y \times \widetilde W_y$, $(z, b) \in O_{y'} \times \widetilde W_{y'}$ and $z \in V_x$. 
Since $V_x$ meets both $O_y$ and $O_{y'}$, 
it follows that $\widetilde W_y,  \widetilde W_{y'} \subset W_x$ and 
that $(z, ab) \in U_x \times W_x W_x \subset \U$. 
This means that $\V\V \subset \U$. 
\end{proof}

%%%%%%%%%%%%%%%%%%%%%%

\subsection{Compact case} 

When $X$ is compact, the graph topology on $C(X, Y)$ coincides with the compact-open topology. 
In this subsection we list some basic facts on the space $C(X, Y)$ for a compact metrizable space $X$.
The proof of the main theorem in \cite{Sa1} can be modified to prove the following relative version:

\begin{theorem}[{\rm \cite{Sa1}}]\label{thm_l_2}  
Let $X$ be a compact metrizable space with a compact subset $K \subsetneqq X$
 and $Y$ a Polish AR (ANR) without isolated points. 
If $X \setminus K$ is infinite or $Y \approx_\ell l_2$, then $C(X, K; Y, y) \approx_{(\ell)} l_2$ for any $y \in Y$. 
\end{theorem}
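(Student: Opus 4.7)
The plan is to apply Toru\'nczyk's characterization of $l_2$-manifolds: a Polish ANR is an $l_2$-manifold iff it satisfies the discrete approximation property (DAP), and a contractible $l_2$-manifold is homeomorphic to $l_2$. So I need to establish three properties of $M := C(X,K;Y,y)$ --- Polishness, the ANR (AR) property, and the DAP --- adapting the argument of \cite{Sa1} so as to respect the fixed-value condition $f|_K \equiv y$.

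Polishness is immediate, since $X$ compact makes the Whitney topology on $C(X,Y)$ coincide with the compact-open topology; thus $C(X,Y)$ is Polish and $M$ is a closed subspace. For the ANR (AR) property I would use the restriction map $r \colon C(X,Y) \to C(K,Y)$, $r(f) = f|_K$: because $K$ is closed in the compact metric space $X$ and $Y$ is an ANR, the pair $(X,K)$ has the homotopy extension property with respect to $Y$, and $r$ admits local sections at every point of its image (extend small perturbations locally using compactness of $K$, then glue through an ANR retraction of a neighborhood of $Y$ in a normed linear space). The fiber $M = r^{-1}(\widehat{y}_K)$ then inherits the ANR property from $C(X,Y)$. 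When $Y$ is an AR, the pointwise contraction of $Y$ to $y$ preserves the condition $f|_K \equiv y$ and contracts $M$ to $\widehat{y}_X$, so $M$ is contractible, hence an AR.

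The heart of the proof is the DAP: given an open cover $\mathcal{U}$ of $M$ and two maps $\varphi_0, \varphi_1 \colon Z \to M$ from a Polish space $Z$, I must produce $\mathcal{U}$-close approximations $\varphi_0', \varphi_1'$ with $\varphi_0'(Z) \cap \varphi_1'(Z) = \emptyset$. The two hypotheses supply the needed infinite-dimensional wiggle room in different ways. If $X \setminus K$ is infinite, choose a countable closed discrete family $\{x_n\} \subset X \setminus K$ with pairwise disjoint open neighborhoods $V_n \subset X \setminus K$; perturbations of $f \in M$ supported in the $V_n$ leave $f|_K$ untouched, and since $Y$ has no isolated points each $V_n$ contributes an independent perturbation direction --- this is the $\cite{Sa1}$ argument, now localized to $X \setminus K$. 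If instead $Y \approx_\ell l_2$, the target itself carries a local Hilbert coordinate chart at $y$, and small pointwise perturbations into that chart, damped by a Urysohn factor of $d(\cdot, K)$, achieve the separation while preserving the boundary condition.

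The main obstacle I expect is the DAP verification, and specifically the interplay between the vanishing condition on $K$ and the required $\mathcal{U}$-closeness. In the first case the issue is painless because the supports are kept inside $X \setminus K$. In the second case the Urysohn damping threatens to shrink the perturbation below the separation threshold near $K$, so one must choose the local $l_2$-chart small enough (using that $y$ is not isolated in $Y$) and the perturbations thin enough that even after damping the images of $\varphi_0$ and $\varphi_1$ are separated. Once the DAP is established, Toru\'nczyk's theorem yields that $M$ is an $l_2$-manifold, and contractibility in the AR case upgrades this to $M \approx l_2$.
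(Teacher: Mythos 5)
Your overall route is the one the paper itself intends: the statement is quoted from \cite{Sa1} with only the remark that the proof of the main theorem there ``can be modified'' to give this relative version, and that proof is exactly a Toru\'nczyk-characterization argument (Polish $+$ ANR $+$ discrete approximation property, then the classification of $l_2$-manifolds by homotopy type to get $\approx l_2$ in the contractible case), with the modification you describe: perturbations supported in $X\setminus K$ when $X\setminus K$ is infinite, and the $l_2$-manifold structure of $Y$ when it is not. (In the latter case note that $X\setminus K$ is a finite open set, so its points are isolated and $C(X,K;Y,y)\approx Y^{|X\setminus K|}$; no damping argument is really needed there.)

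There is, however, a genuine gap in the criterion you actually set out to verify. The discrete approximation property is \emph{not} the statement that two maps $\varphi_0,\varphi_1\colon Z\to M$ of a Polish space can be $\U$-approximated by maps with disjoint images; it requires that every map $f\colon\bigoplus_{n\in\IN}I^n\to M$ of the countable topological sum of finite-dimensional cubes be $\U$-approximable by a map $g$ for which the family $\{g(I^n)\}_{n\in\IN}$ is \emph{discrete} in $M$. The property you verify is strictly weaker and cannot yield the conclusion: the Hilbert cube $Q=[0,1]^{\IN}$ satisfies it (given $\varphi_0,\varphi_1\colon Z\to Q$, replace a sufficiently remote coordinate of $\varphi_0$ by $0$ and of $\varphi_1$ by $1$), yet $Q$ is a Polish AR which is not an $l_2$-manifold, since a compact space admits no infinite discrete family of nonempty sets. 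So your verification step must be upgraded: using the infinitely many disjoint regions $V_n\subset X\setminus K$ (respectively, the infinite-dimensional chart of $Y$ at $y$), one prescribes, for the image of the $n$-th cube, values at chosen points of $X\setminus K$ that keep the images of distinct cubes uniformly separated and, moreover, make the whole family discrete --- this is where non-compactness of the perturbation data genuinely enters, and it is the part of \cite{Sa1} that has to be redone relative to $K$. A secondary soft spot: the assertion that the fiber $r^{-1}(\widehat y_K)$ ``inherits'' the ANR property from $C(X,Y)$ because $r$ has local sections is not an argument; the standard proof embeds $Y$ as a neighborhood retract of a normed space $E$ and exhibits $C(X,K;Y,y)$ as a retract of an open subset of the convex set $C(X,K;E,0)$, and in the AR case uses a contraction of $Y$ fixing $y$ (which exists since $Y$ is an absolute extensor) to contract $C(X,K;Y,y)$ to $\widehat y_X$.
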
 

When $Y$ is a topological group, we have a more precise conclusion.

\begin{theorem}[{\rm \cite{Gle, MZ}, \cite[Corollary 1]{DT}}]\label{thm_Lie} 
Let $G$ be a Polish AR (ANR) group. 
Then,
\begin{enumerate}
\item $G \approx_{(\ell)} l_2$ if $G$ is non-locally compact, and
\item $G \approx_{(\ell)} \IR^n$ for some $n \geq 0$
 if $G$ is locally compact.
\end{enumerate}
\end{theorem}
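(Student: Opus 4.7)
The plan is to split into the two cases according to whether $G$ is locally compact, invoking in each case a deep classical classification result.

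For statement (2), when $G$ is locally compact, I would appeal to the positive solution of Hilbert's fifth problem due to Gleason and Montgomery--Zippin. A locally compact Hausdorff group with no small subgroups is a Lie group; and a locally compact ANR group satisfies the no-small-subgroups condition because the ANR property forces finite dimensionality in a neighborhood of $e$ (a locally compact ANR has finite covering dimension locally, and a locally compact finite-dimensional group has no small subgroups). Hence $G$ is a Lie group, and so $G \approx_\ell \IR^n$ with $n = \dim G$. If in addition $G$ is an AR, then $G$ is contractible, and a contractible Lie group is diffeomorphic (hence homeomorphic) to $\IR^n$ by the Iwasawa/Cartan decomposition, upgrading the local statement to a global one.

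For statement (1), when $G$ is not locally compact, I would invoke the Dobrowolski--Toru\'nczyk characterization: a Polish topological group $G$ is an $l_2$-manifold if and only if $G$ is an ANR and $G$ is not locally compact. Thus under our hypotheses $G$ is an $l_2$-manifold, giving $G \approx_\ell l_2$ at once. If additionally $G$ is an AR, then $G$ is a contractible $l_2$-manifold, and by Henderson's classification of $l_2$-manifolds (two $l_2$-manifolds are homeomorphic iff they are homotopy equivalent) one concludes $G \approx l_2$.

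The main obstacle is not really internal to this proof: both halves reduce to citing sharp black-box theorems (Hilbert's 5th; Dobrowolski--Toru\'nczyk; Henderson's classification). The only nontrivial verification on our side is in case~(2), namely explaining why \emph{ANR} plus \emph{locally compact} plus \emph{Polish} is enough to trigger Gleason--Montgomery--Zippin. The cleanest route is to note that an ANR is locally contractible, in particular $G$ has a neighborhood basis at $e$ of contractible open sets; combined with local compactness of a Polish group, this yields finite local dimension, and a standard argument then rules out small subgroups. Once one is willing to assume this (as the cited references do), both (1) and (2) follow immediately, together with the strengthening to a global homeomorphism under the AR hypothesis.
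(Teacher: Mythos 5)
Your overall route is the same as the paper's: the theorem is essentially a citation result, and the paper also handles case (1) by quoting Dobrowolski--Toru\'nczyk \cite[Corollary 1]{DT} (Polish ANR group, not locally compact $\Rightarrow$ $l_2$-manifold), case (2) by the Gleason/Montgomery--Zippin resolution of Hilbert's fifth problem (locally compact ANR group $\Rightarrow$ Lie group), and the AR upgrades exactly as you do: a contractible $l_2$-manifold is homeomorphic to $l_2$, and a contractible Lie group is homeomorphic to $\IR^n$ via the Cartan--Malcev--Iwasawa decomposition $G\approx K\times\IR^n$ with $K$ maximal compact (contractibility forces $K$ to be a point). One caution about your aside in case (2): the claim that ``a locally compact ANR has finite covering dimension locally'' is false as a statement about spaces --- the Hilbert cube is a compact AR of infinite dimension --- so you cannot derive the no-small-subgroups property that way. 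What is true, and is precisely the content of the cited theorems, is that a locally compact, locally contractible \emph{topological group} has no small subgroups and is a Lie group; this uses the Gleason--Yamabe structure theory (approximation by Lie quotients modulo small compact normal subgroups, which local contractibility rules out), not dimension theory of ANRs. Since you ultimately defer to the references, as the paper does, the proof stands, but that particular justification should either be dropped or replaced by the group-theoretic argument.
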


In Theorem~\ref{thm_Lie}, if $G$ is an ANR, then $G$ is an $l_2$-manifold in the case (1) and a Lie group in the case (2) (\cite[Corollary 1]{DT}). 
For the AR case, note that 
(1) any contractible $l_2$-manifold is homeomorphic to $l_2$ itself and 
(2) any contractible $n$-dimensional Lie group is homeomorphic to $\IR^n$. 
In fact, by Cartan-Malcev-Iwasawa's polar decomposition theorem \cite{Iw} 
any connected Lie group $G$ admits a factorization 
$G \approx K \times \IR^n$, where $K$ is any maximal compact subgroup of $G$. 
If $G$ is contractible, then the closed manifold $K$ is also contractible and hence consists of a single point. 

\begin{corollary} \label{C(X,G) is a mfd} 
Let $X$ be a compact metrizable space with a compact subset $K \subsetneqq X$ and
 $G$ a non-discrete Polish AR (ANR) group. 
\begin{enumerate}
\item If $X\setminus K$ is infinite or $G$ is non-locally compact, then $C(X,K;G,e) \approx_{(\ell)}l_2$. 
\item If $X\setminus K$ is finite and $G$ is locally compact, then $C(X,K;G,e) \approx_{(\ell)}\IR^n$ for some $n\ge1$.
\end{enumerate}
\end{corollary}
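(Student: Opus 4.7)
The plan is to derive both parts directly from Theorems~\ref{thm_l_2} and~\ref{thm_Lie}. One preliminary observation I would record is that any non-discrete topological group $G$ has no isolated points: if some $g\in G$ were isolated, then by homogeneity of $G$ under left translation every point would be isolated, making $G$ discrete. Thus $G$ satisfies the ``no isolated points'' hypothesis of Theorem~\ref{thm_l_2}.

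For part~(1) I split on the stated alternative. If $X\setminus K$ is infinite, then Theorem~\ref{thm_l_2} applied with $Y=G$ yields $C(X,K;G,e)\approx_{(\ell)}l_2$ at once. If instead $G$ is non-locally compact, Theorem~\ref{thm_Lie}(1) gives $G\approx_{(\ell)}l_2$, in particular $G\approx_\ell l_2$, so the second alternative in the hypothesis of Theorem~\ref{thm_l_2} is met and again $C(X,K;G,e)\approx_{(\ell)}l_2$.

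For part~(2) the finiteness of $X\setminus K$ is the structural key. In the compact metrizable space $X$, the set $X\setminus K$ is open (as $K$ is closed) and finite, so in the $T_1$ space $X$ each point of $X\setminus K$ is clopen, i.e.\ isolated in $X$. Write $X\setminus K=\{x_1,\dots,x_m\}$ with $m\ge 1$. The evaluation
\[ \varepsilon\colon C(X,K;G,e)\longrightarrow G^m,\qquad \varepsilon(f)=(f(x_1),\dots,f(x_m)), \]
is an algebraic bijection, since an element of $C(X,K;G,e)$ is the unique extension by $e$ on $K$ of its values at the isolated points $x_i$. I would then verify that $\varepsilon$ is a homeomorphism in the graph topology: continuity is standard, and for the inverse, given a basic neighborhood $\langle\U\rangle$ of $f$, compactness of $K$ lets one pick a neighborhood $W$ of $e$ with $K\times W\subset\U$, while the isolation of each $x_i$ lets one pick a neighborhood $V_i$ of $f(x_i)$ with $\{x_i\}\times V_i\subset\U$; then $\varepsilon^{-1}(V_1\times\cdots\times V_m)\subset\langle\U\rangle$. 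Finally, Theorem~\ref{thm_Lie}(2) gives $G\approx_{(\ell)}\IR^n$ for some $n\ge 0$, and $n\ge 1$ because $G$ is non-discrete; hence $C(X,K;G,e)\approx G^m\approx_{(\ell)}\IR^{nm}$ with $nm\ge 1$.

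The only non-routine step is the bicontinuity of $\varepsilon$ in part~(2), but the isolatedness of the $x_i$'s completely decouples the choice of boxes around $K\times\{e\}$ from those around each $(x_i,f(x_i))$, so no serious obstacle is anticipated.
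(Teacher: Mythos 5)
Your proposal is correct and follows essentially the route the paper intends: the corollary is stated without an explicit proof precisely because it is meant to be read off from Theorem~\ref{thm_l_2} and Theorem~\ref{thm_Lie}, using non-discreteness (hence no isolated points) of $G$ in part~(1) and the identification $C(X,K;G,e)\approx G^{m}$ via the finitely many isolated points of $X\setminus K$ in part~(2). Your verification of the bicontinuity of the evaluation map and of $n\ge 1$ just makes explicit what the paper leaves implicit.
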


We also need some extension/deformation lemmas for maps.

\begin{lemma}\label{lem_ANR-ext}
Let $X$ be a compact metrizable space
 with $K, L \subset X$ disjoint compact subsets. 
\begin{enumerate}
\item If $Y$ is an ANR, then for any map $f : X \to Y$ 
there exist a neighborhood $\U$ of $f|_K$ in $C(K, Y)$ and a map $s : \U \to C(X, Y)$ such that $s(g)|_K = g$, $s(g)|_L = f|_L$ $(g \in \U)$ and $s(f|_K) = f$. 
\item If $Y$ is an AR, then we can take $\U=C(K, Y)$ in {\rm (1)}. 
\end{enumerate}
\end{lemma}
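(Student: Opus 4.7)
The plan is to reduce the lemma to a Dugundji-type extension followed by a retraction. I would embed $Y$ as a closed subset of some Banach space $E$ (possible since $Y$ is metrizable) and use the ANR hypothesis to find an open neighborhood $U$ of $Y$ in $E$ together with a retraction $r:U\to Y$; if $Y$ is an AR, one can take $U=E$. Dugundji's extension theorem then produces a continuous linear operator $D:C(K\cup L,E)\to C(X,E)$ such that $D(h)|_{K\cup L}=h$ for every $h$.

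The key device is a correction term. I would set $\tilde f=f-D(f|_{K\cup L})\in C(X,E)$, which vanishes on $K\cup L$, and define
$$\Phi_f(h)=D(h)+\tilde f=D\bigl(h-f|_{K\cup L}\bigr)+f,\qquad h\in C(K\cup L,E).$$
Then $h\mapsto \Phi_f(h)$ is continuous and linear-affine, $\Phi_f(h)|_{K\cup L}=h$, and crucially $\Phi_f(f|_{K\cup L})=f$. Since $f(X)\subset Y\subset U$ is compact and $U$ is open in $E$, there exists $\delta>0$ such that the $\delta$-neighborhood of $f(X)$ is contained in $U$; combined with the continuity of $\Phi_f$, this yields a neighborhood $\V$ of $f|_{K\cup L}$ in $C(K\cup L,Y)$ with $\Phi_f(h)(X)\subset U$ for all $h\in\V$. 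I would then put $\bar s(h)=r\circ \Phi_f(h):\V\to C(X,Y)$, which is continuous and satisfies $\bar s(h)|_{K\cup L}=h$ and $\bar s(f|_{K\cup L})=f$.

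To finish, since $K$ and $L$ are disjoint compact sets, the ``paste-in'' embedding $\iota:C(K,Y)\to C(K\cup L,Y)$ with $\iota(g)|_K=g$ and $\iota(g)|_L=f|_L$ is continuous, so $\U:=\iota^{-1}(\V)$ is a neighborhood of $f|_K$ and $s(g):=\bar s(\iota(g))$ has all the required properties. For part (2), the AR hypothesis lets us take $U=E$, which makes the uniform-closeness argument vacuous and permits $\V=C(K\cup L,Y)$, hence $\U=C(K,Y)$. The main obstacle is precisely this uniform-closeness step: one must keep the image $\Phi_f(h)(X)$ inside $U$ globally over the compact space $X$, which is possible only locally around $f|_K$ in the ANR case but globally in the AR case---this is exactly what forces the distinction between (1) and (2).
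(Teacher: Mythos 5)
Your argument is correct, but it takes a genuinely different route from the paper. The paper does not embed $Y$ anywhere: it works directly with the metrizable space $C(K,Y)\times X$, defines the closed set $H=\big(C(K,Y)\times(K\cup L)\big)\cup(\{f|_K\}\times X)$ and the map $\varphi(g,x)=g(x)$ for $x\in K$, $\varphi(g,x)=f(x)$ for $x\in L$ or $g=f|_K$, extends $\varphi$ over a neighborhood $\V$ of $H$ using only the ANR extension property of $Y$, and then uses compactness of $X$ to shrink $\V$ to a tube $\U\times X$; the section is $s(g)(x)=\tilde\varphi(g,x)$, and the AR case is the same with $\V=C(K,Y)\times X$. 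Your route instead goes through a closed embedding of $Y$ into a linear space, the Dugundji extension operator $D$, the affine correction $\Phi_f(h)=D(h-f|_{K\cup L})+f$, and a neighborhood retraction $r:U\to Y$; this buys an explicit, even $1$-Lipschitz-affine, formula and a quantitative $\delta$-ball neighborhood, at the cost of extra machinery, whereas the paper's proof is shorter and uses nothing beyond the definition of ANR plus metrizability of $C(K,Y)\times X$. One small technical point in your write-up: a general (not necessarily complete) metrizable $Y$ need not embed as a closed subset of a \emph{Banach} space; you should invoke the Arens--Eells theorem to embed $Y$ as a closed subset of a normed linear space $E$ (Dugundji's theorem and the neighborhood retraction work equally well there), or restrict to completely metrizable $Y$, which suffices for the paper's application where $Y=G$ is Polish. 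With that adjustment, every step of your proof (continuity of $D$ in the sup metric, $\Phi_f(h)|_{K\cup L}=h$, $\Phi_f(f|_{K\cup L})=f$, keeping $\Phi_f(h)(X)$ inside $U$ near $f|_K$, and the paste-in map $\iota$ producing $\U=\iota^{-1}(\V)$) checks out, and the (1) versus (2) dichotomy is correctly located in the local versus global containment of the image in $U$.
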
 

\begin{proof} 
(1) We define a closed subset $H$ of the space $C(K,Y) \times X$
 and a map $\varphi : H \to Y$ by 
\begin{gather*}
H = \big(C(K,Y) \times (K \cup L)\big) \cup (\{f|_K\} \times X) \;\text{ and}\\
\varphi(g,x) = \begin{cases}
 g(x) &\text{if $x \in K$,}\\[0.5mm] 
 f(x) &\text{if $x \in L$ or $g = f|_K$.}
\end{cases}
\end{gather*}
Since $Y$ is an ANR and $C(K,Y) \times X$ is metrizable, 
the map $\varphi$ extends to a map $\tilde{\varphi} : \V \to Y$  over a neighborhood $\V$ of $H$ in $C(K,Y) \times X$.
Using compactness of $X$,
 	we can find a neighborhood $\U$ of $f|_K \in C(K,Y)$ such that $\U \times X  \subset \V$.
Then, the desired map $s : \U \to C(X,Y)$ is defined by 
$s(g)(x) = \tilde{\varphi}(g,x)$. 

(2) If $Y$ is an AR, then we can take $\V = C(K,Y) \times X$ and 
$\U = C(K,Y)$. 
\end{proof} 

\begin{lemma}\label{lem_LC-deform}
Suppose $X$ is a compact metrizable space and $K, L$ are 
disjoint compact subsets of $X$. 
\begin{enumerate}
\item 
If $Y$ is strongly locally contractible at a point $y \in Y$, then the following hold: 
\begin{enumerate}
\item
For any neighborhood $\V$ of $\widehat{y}_X$ in $C(X, Y)$  
there exist a neighborhood $\U$ of $\widehat{y}_X$ in $C(X, Y)$ 
and a homotopy $s_t : \U \to \V$ $(t \in [0,1])$ such that \ \ for each $f \in \U$ 
\begin{itemize}
\item[(a)] $s_0(f) = f$, \ \ {\rm (b)} $s_1(f) \in C(X, L; Y,y)$, 
\item[(c)] $s_t(f)|_{K} = f|_{K}$ $(t \in [0,1])$, 
\item[(d)] if $x \in X$ and $f(x) = y$, then $s_t(f)(x) = y$ $(t \in [0,1])$ \\ 
$($in particular, $s_t(\widehat y_X)=\widehat y_X${}$)$.  
\end{itemize}
\item
 For any subset $F \subset X$, the subspace $C(X, F; Y, y)$ is strongly locally contractible at $\widehat{y}_X$. 
\end{enumerate}
\item
If $Y$ is strongly contractible at $y$, then the following hold: 
\begin{enumerate}
\item  There exists a homotopy $s_t : C(X, Y) \to C(X, Y)$ $(t \in [0,1])$ 
which satisfies the consitions {\rm (1)(i)(a)\,-\,(d)} for each $f \in  C(X, Y)$. 
\item For any subset $F \subset X$, the subspace $C(X, F; Y, y)$ is strongly contractible at $\widehat{y}_X$. 
\end{enumerate}
\end{enumerate}
\end{lemma}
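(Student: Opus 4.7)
The plan is to write $s_t$ via the explicit formula $s_t(f)(x) = h(f(x), t\lambda(x))$, where $h$ is a local contraction of $Y$ at $y$ and $\lambda$ is a Urysohn function separating $K$ and $L$. First I would reduce the shape of $\V$: every neighborhood of $\widehat{y}_X$ contains a basic one $\langle \mathcal{W}\rangle$ with $\mathcal{W} \supset X \times \{y\}$ open in $X \times Y$, and since $X$ is compact the tube lemma supplies an open neighborhood $W$ of $y$ in $Y$ with $X \times W \subset \mathcal{W}$. So we may assume $\V = \{f \in C(X,Y) : f(X) \subset W\}$.

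By strong local contractibility of $Y$ at $y$, pick an open neighborhood $W'$ of $y$ in $W$ and a homotopy $h : W' \times [0,1] \to W$ with $h(z,0)=z$, $h(z,1)=y$, and $h(y,t)=y$ for all $z,t$. Since $K$ and $L$ are disjoint compact subsets of the metric space $X$, Urysohn's lemma furnishes a continuous $\lambda : X \to [0,1]$ with $\lambda|_K \equiv 0$ and $\lambda|_L \equiv 1$. Set $\U = \{f \in C(X,Y) : f(X) \subset W'\}$ and $s_t(f)(x) = h(f(x), t\lambda(x))$. Condition (a) is the case $t=0$; (c) follows from $\lambda|_K \equiv 0$; (b) from $\lambda|_L \equiv 1$ combined with $h(\cdot,1) \equiv y$; and (d) from $h(y,\cdot) \equiv y$. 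Moreover $s_t(f)(x) \in W$, so $s_t(\U) \subset \V$. Joint continuity of $(f,x,t) \mapsto h(f(x), t\lambda(x))$ on $\U \times X \times [0,1]$, combined with compactness of $X$ (so that the Whitney topology on $C(X,Y)$ is the compact-open topology and the exponential law applies), yields continuity of $s : \U \times [0,1] \to \V$. This proves (1)(i).

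For (1)(ii), apply (1)(i) with $K = \emptyset$ and $L = X$, both compact and disjoint; by (a) and (b) the homotopy contracts $\U$ to $\widehat{y}_X$ inside $\V$, and by (d), any $f \in \U$ with $f(F) = \{y\}$ satisfies $s_t(f)(F) = \{y\}$, so $s_t$ restricts to a contraction of $\U \cap C(X,F;Y,y)$ to $\widehat{y}_X$ in $\V \cap C(X,F;Y,y)$. Part (2) is the same argument with the tube-lemma reduction suppressed: strong contractibility at $y$ supplies a global $h : Y \times [0,1] \to Y$ with $h(y,\cdot) \equiv y$, and the same formula $s_t(f)(x) = h(f(x), t\lambda(x))$ defines a homotopy on all of $C(X,Y)$ satisfying (a)--(d); (2)(ii) then follows from (2)(i) exactly as (1)(ii) follows from (1)(i). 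The only delicate point is the Whitney-continuity of $s$, which is handled by the compactness of $X$, so there is no serious obstacle here; the substance of the lemma is just the correct choice of the parameter $t\lambda(x)$ in the contraction.
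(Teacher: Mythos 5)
Your proposal is correct and follows essentially the same route as the paper: both use a Urysohn function $\lambda$ separating $K$ and $L$, reduce $\V$ to a tube $C(X,W)$ (possible since $X$ is compact), and define $s_t(f)(x)=h_{t\lambda(x)}(f(x))$ with a (local) strong contraction $h$ of $Y$ at $y$, deducing (1)(ii) and (2)(ii) from condition (d). Your remarks on the tube lemma and the continuity of $s$ via the compact-open topology merely make explicit what the paper leaves implicit.
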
 

\begin{proof} 
(1)\,(i) Take a map $\lambda : X \to [0,1]$ with $\lambda(K)=0$ and $\lambda(L)=1$. There exists a neighborhood $V$ of $y$ in $Y$ with  
$C(X, V) \subset \V$. 
By the assumption, there is a neighborhood $U$ of $y$ in $Y$ and a homotopy  
$h_t : U \to V$ $(t \in [0,1])$ such that $h_0 = \id_U$, $h_1= \widehat{y}_U$ and $h_t(y) = y$. Then $\U =C(X,U)$ is a neighborhood of $\widehat y_X$ and 
the required homotopy $s_t:\U \to \V$ is 
defined by $s_t(f)(x)= h_{\lambda(x)t}(f(x))$. 

(ii) We can apply (1)(i) to $K = \emptyset$ and $L = X$ so to obtain strong local contractions $s_t : \U \to \V$ at $\widehat y_X$ in $C(X,Y)$. The condition (d) implies that $s_t(\U \cap C(X, F; Y, y)) \subset \V \cap C(X, F; Y, y)$. 

(2)\,(i) The homotopy $s_t$ is defined in the same way as in (1)(i) 
by using a strong contraction $h_t$ of $Y$ at~$y$. 

(ii) The condition (d) assures that $s_t(C(X, F; Y, y)) \subset C(X, F; Y, y)$. 
\end{proof} 

%%%%%%%%%%%%%%%

\section{Topological type of $(C(X, G), C_c(X, G))$} 

In this section we prove Theorem~\ref{thm_loc-homeo}.  
To simplify the arguments, 
we use the following terminology: For pairs of spaces $(X, A)$ and $(Y, B)$, (a) we set 
$(X, A)\times (Y, B) = (X \times Y, A \times B)$, (b) we  
write 
$(X, A) \approx_\ell (Y, B)$ if for each point $a \in A$ there exist an open neighborhood $U$ of $a$ in $X$ and an open subset $V$ of $Y$ 
which admit a homeomorphism of pairs of spaces $(U, U \cap A) \approx (V, V \cap B)$, and (c) we say that a map $p : (X, A) \to (Y, B)$ has a local section at a point $b \in B$, 
if there exist an open neighborhood $V$ of $b$ in $Y$ and a map of pairs
$s : (V, V \cap B) \to (X, A)$ such that $ps = \id_V$.

Suppose $G$ is a Hausdorff topological group and 
$X$ is a locally compact Polish space. 
Let $\G = C(X, G)$ and $\G_c = C_c(X, G)$. 
For subsets $K, N \subset X$, let 
$$\begin{array}[c]{l}
\G_K = C(X, K; G,e), \ \ \ 
\G(N) = \G_{X \setminus N}, \\[1mm] 
\G_K(N) = \G_K \cap \G(N) \ \ \ \text{and} \ \ \ \G_c(N) = \G(N) \cap \G_c. 
\end{array}$$ 
Every discrete family $\LL = (L_i)_{i\in\IN}$ of compact subsets of $X$ induces two maps 
$$\begin{array}[c]{l}
r_\LL : (\G, \G_c) \to (\square, \cbox)_{i\in\IN} C(L_i,G),  \ \ \ r_\LL(f) = (f|_{L_i})_{i\in\IN}, 
\ \ \ \text{and} \\[2mm] 
\lambda_\LL : (\square, \cbox)_{i\in\IN} \G(L_i) \to (\G(L),\G_c(L)), 
\ \ \ \lambda_\LL((f_i)_{i\in\IN})|_{L_i} = f_i|_{L_i},  
\end{array}$$
where $L = \bigcup_{i\in\IN} L_i$.
Note that the map $\lambda_\LL$ is a homeomorphism. 

Let $\LL = (L_i)_{i\in\IN}$, $\N = (N_i)_{i\in\IN}$ and $\K = (K_i)_{i\in\IN}$ be discrete families of compact subsets of $X$ 
such that $L_i \subset {\rm Int}\,N_i$ $(i \in \IN)$ and $X = L \cup K$, where 
$L = \bigcup_{i\in\IN} L_i$, $K = \bigcup_{i\in\IN} K_i$ and $N = \bigcup_{i\in\IN} N_i$. The families $\N$ and $\K$ induces the homeomorphisms 
$$\lambda_\N : (\square, \cbox)_{i\in\IN} \G(N_i) \to (\G(N),\G_c(N)) 
	 \ \ \ \text{and} \ \ \ 
\lambda_\K: (\square, \cbox)_{i\in\IN} \G(K_i) \to (\G(K), \G_c(K)),$$ 
and we obtain the map 
\begin{gather*}
\rho : (\square,\cbox)_{i\in\IN}\G(N_i) \times
 (\square,\cbox)_{i\in\IN}\G(K_i) \to (\G, \G_c), \\ 
\rho((g_i)_{i\in\IN},(h_i)_{i\in\IN})
 = \lambda_\N((g_i)_{i\in\IN}) \cdot \lambda_\K((h_i)_{i\in\IN}).
\end{gather*}

\begin{lemma}\label{lem_LSP_G}
\begin{enumerate}
\item If $G$ is an AR (ANR), then
$$(\G, \G_c) \approx_{(\ell)} 
 (\square, \cbox)_{i\in\IN}C(L_i, G) \times (\square, \cbox)_{i\in\IN}\G_L(K_i).$$
\item If $G$ is (locally) contractible,
 then the map $\rho$ has a (local) section at $\widehat{e}_X$. 
\end{enumerate}
\end{lemma}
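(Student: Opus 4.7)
My plan for part (1) is to construct an explicit (local) homeomorphism by gluing extensions from Lemma~\ref{lem_ANR-ext}. For each $i$, I apply that lemma to the compact metrizable space $N_i$ with disjoint compact subsets $L_i$ and $L'_i := N_i \setminus \mathrm{Int}_X N_i$, taking the reference map $\widehat{e}_{N_i}$. This yields a neighborhood $\U_i$ of $\widehat{e}_{L_i}$ in $C(L_i, G)$ and a continuous map $s_i : \U_i \to C(N_i, G)$ with $s_i(g)|_{L_i} = g$ and $s_i(g)|_{L'_i} = e$ in the ANR case, and $\U_i = C(L_i, G)$ in the AR case. Extending by $e$ off $N_i$ lifts $s_i$ to $\tilde{s}_i : \U_i \to \G(N_i)$, and composing the coordinatewise product $\prod_i \tilde{s}_i$ (continuous between box products, since each factor is continuous) with the homeomorphism $\lambda_\N$ produces $\Sigma : \square_{i\in\IN} \U_i \to \G(N)$ satisfying $\Sigma((g_i))|_{L_i} = g_i$. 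I then define
\[
\phi(f) \;=\; \Bigl((f|_{L_i})_i,\; \lambda_\K^{-1}\bigl(\Sigma((f|_{L_i}))^{-1}\cdot f\bigr)\Bigr)
\]
on the open set $r_\LL^{-1}(\square_{i\in\IN} \U_i) \subset \G$. The second factor lies in $\square_{i\in\IN} \G_L(K_i)$ because $\Sigma((f|_{L_i}))^{-1}\cdot f$ vanishes on every $L_i$, hence on $L$, so it belongs to $\G_L \subset \G(K)$, and discreteness of $\K$ pulls it back into the box product of $\G_L(K_i)$. The inverse is $((g_i),(h_i)) \mapsto \Sigma((g_i)) \cdot \lambda_\K((h_i))$, and a direct compact-support count shows $\phi$ restricts to a bijection $\G_c \leftrightarrow \cbox_{i\in\IN} C(L_i, G) \times \cbox_{i\in\IN} \G_L(K_i)$. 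This delivers a global homeomorphism of pairs when $G$ is AR; when $G$ is ANR it gives a local one at $\widehat{e}_X$, which I propagate to every point of $\G_c$ via left multiplication by an element of $\G_c$, a self-homeomorphism of the pair by Lemma~\ref{lem_inv-lrm}.

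For part (2), I choose a continuous $\lambda : X \to [0,1]$ with $\lambda = 1$ on $L$ and $\lambda = 0$ off $\bigcup_i \mathrm{Int}\, N_i$ (possible because $\N$ is discrete) together with a (local) strong contraction $h_t : (U, e) \to (V, e)$ of $G$ at $e$ with $h_0 = \id$, $h_1 = \widehat{e}$, and $h_t(e) = e$; such a contraction exists since any (locally) contractible topological group is strongly (locally) contractible at $e$. For $f$ with $f(X) \subset U$, I set $g(x) = h_{1-\lambda(x)}(f(x))$ on $N$ and $g(x) = e$ on $X \setminus N$; these match on the topological frontier of $N$ because $\lambda = 0$ and $h_1 = \widehat{e}$ there, so $g$ is continuous and lies in $\G(N)$. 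Since $\lambda = 1$ on $L$ forces $g|_L = f|_L$, the element $h := g^{-1}f$ lies in $\G_L \subset \G(K)$. Then $f \mapsto (\lambda_\N^{-1}(g),\lambda_\K^{-1}(h))$ is the desired (local) section of $\rho$: globally defined on $\G$ if $G$ is contractible (take $U = G$), and defined on the Whitney-open neighborhood $\langle X \times U\rangle$ of $\widehat{e}_X$ if $G$ is merely locally contractible.

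The main technical obstacle I expect is verifying continuity of $\Sigma$ in part (1) and of $f \mapsto g$ in part (2) with respect to the box and Whitney topologies, particularly continuity into a box product rather than a Tychonoff product. In each case the construction is coordinatewise with respect to the homeomorphisms $\lambda_\N$ and $\lambda_\K$, so continuity reduces to the fact that pointwise group operations applied to Whitney-close inputs produce Whitney-close outputs on each piece; the discreteness of the families $\LL, \N, \K$ is what prevents cross-term failures between pieces. No new ideas beyond Lemma~\ref{lem_inv-lrm} and Proposition~\ref{to be top group} should be required.
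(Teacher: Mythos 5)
Your part (1) is essentially the paper's own proof: you apply Lemma~\ref{lem_ANR-ext} to the triple $(N_i;L_i,\mathrm{Fr}_X N_i)$ with reference map $\widehat e$, assemble the extensions coordinatewise into $\Sigma=\lambda_\N\circ s$, and invert via $f\mapsto (r_\LL(f),\,\Sigma(r_\LL(f))^{-1}f)$ with inverse given by group multiplication; folding $\lambda_\K^{-1}$ into $\phi$ rather than identifying $(\G_L,\G_{L,c})\approx(\square,\cbox)_{i\in\IN}\G_L(K_i)$ afterwards is cosmetic, and your explicit use of left translation to spread the local homeomorphism from $\widehat e_X$ to every point of $\G_c$ makes precise a step the paper leaves implicit. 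Part (2) is where you genuinely diverge: the paper applies the deformation Lemma~\ref{lem_LC-deform} separately on each $N_i$, restricts via $r_\N$, and reassembles the local deformations $s_i'$ through $\lambda_\N$, so its section lives on $r_\N^{-1}(\square_{i\in\IN}\V_i')$; you instead build the section in one stroke from a single Urysohn function $\lambda$ (using that $\N$ is discrete and $X$ is normal) and one strong (local) contraction $h_t$ of $G$ at $e$, i.e.\ you inline globally the tapering trick by which Lemma~\ref{lem_LC-deform} is itself proved. This buys a shorter, more transparent section defined on the simple neighborhood $\langle X\times U\rangle$, at the cost of two checks the paper gets for free from its lemma and which you only gesture at: Whitney-continuity of $f\mapsto g$, $g(x)=h_{1-\lambda(x)}(f(x))$, which follows cleanly from the graph transform $(x,y)\mapsto(x,h_{1-\lambda(x)}(y))$ exactly as in Lemma~\ref{lem_inv-lrm}; and the fact that your section is a map of \emph{pairs} in the sense of the paper's definition of a local section at $\widehat e_X$ --- for compactly supported $f$ the support meets only finitely many $N_i$, and $h_t(e)=e$ then forces $g|_{N_i}=\widehat e$ for all but finitely many $i$, so $\lambda_\N^{-1}(g)$ and $\lambda_\K^{-1}(g^{-1}f)$ do land in the small box products. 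Both checks are routine with the ingredients you already set up (the condition $h_t(e)=e$ is precisely what is needed), so I see no gap, but they should be stated.
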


\begin{proof} 
(1) For each $i \in \IN$, since $L_i$ and $F_i={\rm Fr}_X N_i$ are
	disjoint compact sets in $N_i$, 
by Lemma~\ref{lem_ANR-ext}~(1), 
	there exist an open neighborhood $\V_i$ of 
	$\widehat{e}_{L_i} \in C(L_i, G)$ 
	and a map $s_i : \V_i \to C(N_i, F_i; G, e) \approx \G(N_i)$ 
	such that 
	$s_i(f)|_{L_i} = f$ $(f \in \V_i)$ and 
	$s_i(\widehat{e}_{L_i}) = \widehat{e}_{X}$. 
The maps $s_i$ $(i\in\IN)$ determine a continuous map
	$$s : \square_{i\in\IN} \V_i \to \square_{i\in\IN} \G(N_i),
 	\quad s((f_i)_{i\in\IN}) = \big(s_i(f_i)\big)_{i\in\IN}.$$ 
The preimage $\V=r_\LL^{-1}(\square_{i\in\IN} \V_i)$ is 
	an open neighborhood of $\widehat e_X$ in $\G$.
Consider the map 
$$\eta = \lambda_\N \circ s \circ r_\LL:\V \to \G(N).$$ 
Since $\eta(g)|_{L_i} = g|_{L_i}$ $(i \in \IN)$, 
	we have $\eta(g)^{-1} g \in \G_L$.
Hence, we obtain two maps  
$$\begin{array}[c]{l}
\phi : \V \to \square_{i\in\IN} \V_i\times \G_L, \
\phi(g) = (r_\LL(g), \eta(g)^{-1} \cdot g) \; \text{ and}\\[2mm] 
\theta:\square_{i\in\IN} \G(N_i)\times \G_L \to \G, \
\theta((g_i)_{i\in\IN}, h) = \lambda_{\N}((g_i)_{i\in\IN}) \cdot h.
\end{array}$$ 
It follows that 
$$\theta \circ (s \times \id) \circ \phi = \id_\V.$$
Indeed, we have \ \ 
	$(\theta \circ (s \times \id) \circ \phi)(g) = 
	(\lambda_\N \circ s \circ r_\LL)(g) \cdot \eta(g)^{-1} g = g$ \ \ 
	$(g \in \V).$ \ \ 
Next, consider the map $\psi_0 = \theta \circ (s \times \id): \square_{i\in\IN} \V_i \times \G_L \to \G$.
For any $((f_i)_{i\in\IN},h)\in \square_{i\in\IN} \V_i \times \G_L$, we have
\begin{align*}
\psi_0((f_i)_{i\in\IN},h)|_{L_i}
	&=\theta((s_i(f_i))_{i\in\IN},h)|_{L_i}
	=\lambda_\N ((s_i(f_i))_{i\in\IN})|_{L_i} \cdot h|_{L_i} \\
	&=\lambda_\N ((s_i(f_i))_{i\in\IN})|_{L_i}=f_i \in \V_i.
\end{align*}
This means that $(r_\LL\circ \psi_0)((f_i)_{i\in\IN},h) = (f_i)_{i\in\IN}$ and 
$\psi_0((f_i)_{i\in\IN},h) \in r_\LL^{-1}(\square_{i\in\IN} \V_i)=\V$. 
Thus, we obtain the map 
$$\psi=\theta \circ (s \times \id) : \square_{i\in\IN} \V_i \times \G_L \to \V.$$

Now we shall show that the maps $\phi$ and $\psi$ are reciprocal homeomorphisms.
It remains to show that $\phi\circ \psi=\id$.
For any $((f_i)_{i\in\IN},h)\in \square_{i\in\IN} \V_i \times \G_L$, if we put $g = \psi((f_i)_{i\in\IN},h)$, then we have  (a) $r_\LL(g) = (f_i)_{i\in\IN}$,
(b) $\eta(g) = (\lambda_\N \circ s \circ r_\LL)(g) = (\lambda_\N \circ s)((f_i)_{i\in\IN})$ and (c) 
$g = \psi((f_i)_{i\in\IN},h) = \theta(s((f_i)_{i\in\IN}), h) = (\lambda_\N \circ s)((f_i)_{i\in\IN}) \cdot h$.  
Therefore, it follows that 
$(\phi \circ \psi)((f_i)_{i\in\IN},h) = \phi(g) = (r_\LL(g), \eta(g)^{-1}g) = ((f_i)_{i\in\IN},h)$. 

Since $s_i(\widehat e_{L_i}) = \widehat e_X$ ($i \in \IN$), it is seen that
	$\phi(\V \cap \G_c) = \cbox_{i\in\IN} \V_i \times \G_{L, c}$ \ and 
$$\phi : (\V, \V \cap \G_c) \approx(\square, \cbox)_{i\in\IN} \V_i \times (\G_L, \G_{L, c}).$$
Since $X = L \cup K$, we have $\G_L \subset \G(K)$.
Thus, $\lambda_\K$ restricts to the homeomorphism 
$$\lambda_{\K} : (\square, \cbox)_{i\in\IN} \G_L(K_i) \approx (\G_L, \G_{L, c}).$$ 
Thus, we have 
	$(\V, \V \cap \G_c) \approx (\square, \cbox)_{i\in\IN} \V_i  
		\times (\square, \cbox)_{i\in\IN} \G_L(K_i)$.

If $G$ is an AR, then we can take $\V_i=C(L_i,G)$ and $\V=\G$.
Then, we have
	$$(\G, \G_c) \approx (\square, \cbox)_{i\in\IN} C(L_i,G) 
	\times (\square, \cbox)_{i\in\IN} \G_L(K_i).$$

(2) The argument proceeds along the same line as the first half part of (1). 
The family $\N$ induces the map 
$$r_\N : \G \to \square_{i\in\IN} C(N_i,G), \ \ \ r_\N(f)= (f|_{N_i})_{i\in\IN}.$$
By Lemma~\ref{lem_LC-deform}~(1), for each $i \in \IN$,
	there exist an open neighborhood $\V'_i$ of 
	$\widehat{e}_{N_i} \in C(N_i, G)$
	and a map $s'_i : \V_i' \to C(N_i, F_i; G, e) \approx \G(N_i)$
	such that $s'_i(f)|_{L_i} = f|_{L_i}$ $(f \in \V'_i)$ and 
	$s'_i(\widehat{e}_{N_i}) = \widehat{e}_{N_i}$. 
The maps $(s'_i)_{i\in\IN}$ determine a continuous map
	$$s' : \square_{i\in\IN} \V'_i \to \square_{i\in\IN} \G(N_i),
 	\quad s'((f_i)_{i\in\IN}) = \big(s'_i(f_i)\big)_{i\in\IN}.$$ 
As before, $\V'=r_\N^{-1}(\square_{i\in\IN} \V'_i)$ is 
	an open neighborhood of $\widehat e_X$ in $\G$ and we have maps  
$$\begin{array}[c]{l}
\eta' = \lambda_\N \circ s' \circ r_\N : \V' \to  \G(N) \ \ \ \text{and} \\[2mm] 
\phi':\V'\to\square_{i\in\IN} \V'_i \times \G_L, \ \ \phi'(g)=(r_\N(g),\eta'(g)^{-1}g).
\end{array}$$ 
Note that the map $\rho$ has the factorization 
\vspace{-2mm} 
$$\begin{array}[t]{ccccc}
	& \id \times \lambda_\K & & \theta & \\
	\rho : \ \square_{i\in\IN} \G(N_i) \times \square_{i\in\IN} \G_L(K_i) & \approx 
	& \square_{i\in\IN} \G(N_i) \times \G_L & \longrightarrow & \G. 
	\end{array} \medskip$$
Again, it is seen that $\theta \circ (s' \times \id) \circ \phi' = \id_{\V'}$, and hence 
the map $\theta$ has a local section at $\widehat{e}_X$ : 
$$(s' \times \id) \phi' :(\V', \V' \cap \G_c) 
	\to (\square, \cbox)_{i\in\IN} \G(N_i) \times (\G_L, \G_{L, c}).$$
Thus, the map $\rho$ also has a local section at $\widehat{e}_X$. 

If $G$ is contractible, then we can take $\V_i'=C(N_i,G)$ and $\V'=\G$.
Hence, $\rho$ has a global section.
This completes the proof. 
\end{proof}

A subset $A$ of $M$ is {\em regular closed} if $A = \cl_M (\tint_M A)$. 
Since $X$ is locally compact and $\sigma$-compact, there exists 
	a sequence $(X_i)_{i\in\IN}$ of compact regular closed subsets of 
	$X$ such that
	$X_ i \subset \tint_X X_{i+1}$ ($i \in\IN$) and $X=\bigcup_{i\in\IN}X_i$.
It induces the tower $(\G(X_i))_{i\in\IN}$ of closed subgroups of $\G_c$
	 and the multiplication map
\[ p : \cbox_{i\in\IN} \G(X_i) \to \G_c,
 		\quad p(h_1, \dots, h_n) = h_1 \cdots h_n. \] 
Let $L_i = X_i \setminus \tint_X X_{i-1}$ for each $i \in \IN$,
	where $X_0=\emptyset$.
Since each $X_i$ is regular closed, $L_i$ is also regular closed. 
We call the sequence $(X_i, L_i)_{i\in\IN}$ an {\em exhausting sequence} for $X$.

\begin{proposition}\label{p_loc-homeo}
Suppose $(X_i, L_i)_{i\in\IN}$ is an exhausting sequence for $X$. 
\begin{enumerate}
\item If $G$ is an AR (ANR), then
$$(\G, \G_c) \approx_{(\ell)} \big(\square, \cbox)_{i\in\IN} C(L_{2i}, G) 
	\times (\square, \cbox)_{i\in\IN} \G(L_{2i-1}).$$
\item If $G$ is (locally) contractible, then 
\begin{enumerate}
\item the map $p : \cbox_{i\in\IN} \G(X_i) \to \G_c$
 has a (local) section $s$ 
 at $\widehat{e}_X$ such that $s(\widehat{e}_X) = (\widehat{e}_X,\widehat{e}_X,\dots)$, 
\item the group $\G_c$ is the direct limit 
	of the tower $(\G(X_i))_{i\in\IN}$
 	in the category of topological groups.
\end{enumerate}
\end{enumerate}
\end{proposition}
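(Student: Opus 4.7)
For part (1), the plan is to apply Lemma \ref{lem_LSP_G}(1) with the discrete families $\LL = (L_{2i})_{i\in\IN}$ and $\K = (L_{2i-1})_{i\in\IN}$. Both are discrete families of compact sets in $X$: they are pairwise disjoint because $L_{2i} \subset X_{2i} \setminus \mathrm{Int}\, X_{2i-1}$ and $X_{2i} \subset \mathrm{Int}\, X_{2i+1}$, and locally finite since any compact subset of $X$ lies in some $X_N$, which meets only finitely many $L_k$. Using local compactness of $X$, I choose pairwise disjoint compact neighborhoods $N_i \supset L_{2i}$ with $L_{2i} \subset \mathrm{Int}\, N_i$, giving the required discrete family $\N$. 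Setting $L = \bigcup_i L_{2i}$ and $K = \bigcup_i L_{2i-1}$, we have $X = L \cup K$ since every $x \in X$ lies in some $L_k$ (the smallest $k$ with $x \in X_k$). Lemma \ref{lem_LSP_G}(1) then yields $(\G, \G_c) \approx_{(\ell)} (\square, \cbox)_i C(L_{2i}, G) \times (\square, \cbox)_i \G_L(L_{2i-1})$, and it remains to identify $\G_L(L_{2i-1}) = \G(L_{2i-1})$: the inclusion $\G_L(L_{2i-1}) \subset \G(L_{2i-1})$ is automatic, while any $f \in \G(L_{2i-1})$ is supported in $L_{2i-1}$ and hence vanishes by continuity on $\partial L_{2i-1} = \partial X_{2i-2} \cup \partial X_{2i-1}$, which contains $L \cap L_{2i-1}$.

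For part (2)(a), I construct the section $s$ by iteratively extracting the piece of $f$ living on successive annuli. Set $r_0 = f \in \G_c$. Inductively, given $r_{j-1} \in \G$ with $r_{j-1}|_{X_{j-2}} = e$, I apply Lemma \ref{lem_LC-deform}(1) if $G$ is locally contractible, or Lemma \ref{lem_LC-deform}(2) if $G$ is contractible, to the compact space $X_j$ with disjoint compact subsets $K = X_{j-1}$ and $L = \partial X_j$. This produces, from a neighborhood of $\widehat{e}_{X_j}$ in $C(X_j, G)$ (respectively from all of $C(X_j, G)$), a map $\tilde h_j = s_1(r_{j-1}|_{X_j}) \in C(X_j, \partial X_j; G, e)$ that agrees with $r_{j-1}$ on $X_{j-1}$ and satisfies $\tilde h_j(x) = e$ whenever $r_{j-1}(x) = e$. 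Extending $\tilde h_j$ by $e$ outside $X_j$ yields $h_j \in \G(X_j)$, and I set $r_j = h_j^{-1} r_{j-1}$. Then $r_j|_{X_{j-1}} = e$ and $\supp(r_j) \subset \supp(r_{j-1})$. If $\supp(f) \subset X_N$, then for $j \geq N+1$ one has $\supp(r_j) \subset X_N \cap (X \setminus \mathrm{Int}\, X_{j-1}) \subset \partial X_N$, which has empty interior in $X$; since $\{r_j \neq e\}$ is an open set contained in this nowhere-dense set, it must be empty, so $r_j = \widehat{e}_X$. Thus $f = h_1 h_2 \cdots h_{N+1}$ is a finite product, $s(f) := (h_j(f))_{j\in\IN}$ lies in $\cbox_i \G(X_i)$, and $s(\widehat{e}_X) = (\widehat{e}_X, \widehat{e}_X, \ldots)$ since every $h_j$ vanishes when $f$ does.

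For part (2)(b), the local section at $\widehat{e}_X$ produced in (a) yields via Remark \ref{rem_open}(2) that the map $p$ is open, and then Proposition \ref{open then direct limit} identifies $\G_c$ as the direct limit of the tower $(\G(X_i))_{i\in\IN}$ in the category of topological groups.

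The main obstacle I anticipate is verifying continuity of $s$ with respect to the small box topology on $\cbox_i \G(X_i)$, rather than merely the product topology. Continuity of each component $f \mapsto h_j(f)$ is immediate from Lemma \ref{lem_LC-deform}, but the box topology demands simultaneous smallness of all components: for any collection of open neighborhoods $U_j \ni \widehat{e}_X$ in $\G(X_j)$, one must produce a single Whitney neighborhood $V$ of $\widehat{e}_X$ in $\G_c$ with $h_j(V) \subset U_j$ for every $j$ at once. This is precisely where the Whitney topology is essential — by choosing the defining open set $\U \subset X \times G$ of $V = \langle \U \rangle$ to shrink in the $G$-direction over successive annuli $L_j$ to match the deformation constants from Lemma \ref{lem_LC-deform} applied on each $X_j$, one can obtain the required uniform control and close this gap.
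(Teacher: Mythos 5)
Your part (1) and part (2)(b) follow the paper's own route: (1) is exactly the application of Lemma~\ref{lem_LSP_G}\,(1) to $\LL=(L_{2i})_{i\in\IN}$, $\K=(L_{2i-1})_{i\in\IN}$ with disjoint compact thickenings $N_i$ of the even annuli (your identification $\G_L(L_{2i-1})=\G(L_{2i-1})$ via the frontier of $L_{2i-1}$ is fine, but note it silently uses the regular closedness built into the definition of an exhausting sequence), and (2)(b) is the same deduction from Remark~\ref{rem_open} and Proposition~\ref{open then direct limit}. Where you genuinely diverge is (2)(a): the paper does not iterate over all the $X_j$. It applies Lemma~\ref{lem_LSP_G}\,(2) once, writing $f=(f_1f_2\cdots)(g_1g_2\cdots)$ with $f_i$ supported in $N_{2i}$ and $g_i$ in $L_{2i-1}$, and then interleaves $(\widehat e_X,\widehat e_X,f_1,g_1,f_2,g_2,\dots)$ to get the section of $p$. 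Since each $f_i$ is produced from the restriction of $f$ to a single member of a discrete family by a coordinatewise extension operator, and the $g_i$ are restrictions of one correction factor, continuity into the (small) box product is automatic.

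In your iterative scheme the box-continuity issue you flag at the end is a genuine gap, and the proposed fix does not address it; indeed in the locally contractible case even the existence of one Whitney neighborhood on which $s$ is defined is not established. If Lemma~\ref{lem_LC-deform} is used as a black box, then $h_j$ is determined by $r_{j-1}|_{X_j}$, whose values on $L_{j-1}$ are $\tilde h_{j-1}^{-1}f$, and $\tilde h_{j-1}=s_1^{(j-1)}\big(r_{j-2}|_{X_{j-1}}\big)$ depends on \emph{all} of $r_{j-2}|_{X_{j-1}}$, hence, through the whole previous iteration, on $f$ over all of $X_{j-1}$. So guaranteeing $r_{j-1}|_{X_j}\in\U_j$ for every $j$ simultaneously forces, on each \emph{fixed} early annulus, infinitely many conditions on $f$, one for each later step, and the required constraint propagates backwards: the target neighborhood chosen at step $j-1$ must be small relative to the domain constant of step $j$, which is only known after the choice at step $j$, and so on without end. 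A Whitney neighborhood lets you shrink the tolerance as you go out toward infinity, but it fixes a single tolerance on each compact piece, so ``shrinking in the $G$-direction over successive annuli'' attacks the wrong variable. The scheme can be salvaged, but only by opening up the proof of Lemma~\ref{lem_LC-deform} and using its explicit formula $s_t(f)(x)=h_{\lambda(x)t}(f(x))$: then $\tilde h_{j-1}(x)=h_{\lambda_{j-1}(x)}(f(x))$ for $x\in L_{j-1}$, the recursion telescopes, each $h_j$ becomes a local expression in $f|_{L_{j-1}\cup L_j}$ with constants uniform in $j$, and well-definedness and continuity into $\cbox_{i\in\IN}\G(X_i)$ can then be checked directly. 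As written, however, the construction of the (local) section --- the heart of (2)(a) --- is not complete, whereas the paper's even/odd two-stage decomposition avoids the problem altogether.
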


\begin{proof}
There exists a sequence $(N_i)_{i\in\IN}$ of compact subsets of $X$
	such that $L_i \subset \tint_X N_i$ and $N_i \cap N_j \neq \emptyset$
	iff $|i - j| \leq 1$.
We apply Lemma \ref{lem_LSP_G} to the discrete families 
 $\LL = (L_{2i})_{i\in\IN}$, $\N = (N_{2i})_{i\in\IN}$
 and $\K = (L_{2i-1})_{i\in\IN}$.
Let $L = \bigcup_{i\in\IN} L_{2i}$.
Since $L_{2i-2}$ is regular closed,
 we have $\G_{L}(L_{2i-1}) = \G(L_{2i-1})$ for each $i \in \IN$.
\smallskip

(1) The statement follows directly from Lemma \ref{lem_LSP_G}\,(1).
\smallskip

(2)(i) By Lemma \ref{lem_LSP_G}\,(2) the map
\begin{gather*}
\rho : (\square, \cbox)_{i\in\IN} \G(N_{2i}) 
	\times (\square, \cbox)_{i\in\IN} \G(L_{2i-1})
 	\longrightarrow (\G, \G_c), \\ 
 	\rho((f_i)_{i\in\IN}, (g_i)_{i\in\IN}) = 
 		\lambda_{\N}((f_i)_{i\in\IN}) \cdot \lambda_{\K}((g_i)_{i\in\IN})
\end{gather*}
	has a local section 
	$S : (\V, \V \cap \G_c) \to (\square, \cbox)_{i\in\IN} \G(N_{2i}) 
		\times (\square, \cbox)_{i\in\IN} \G(L_{2i-1})$ at $\widehat{e}_X$
		such that  
	$$S(\widehat{e}_X) = 
		((\widehat{e}_X,\widehat{e}_X,\dots), 
		(\widehat{e}_X,\widehat{e}_X,\dots)).$$
Note that $\W = \V \cap \G_c$ is an open neighborhood of 
	$\widehat{e}_X$ in $\G_c$ 
 	and that for each $h \in \W$
 	the image $S(h) = ((f_i)_{i\in\IN}, (g_i)_{i\in\IN})$
 	satisfies the following conditions:
\begin{itemize}
\item[(a)]
$h = \lambda_{\N}((f_i)_{i\in\IN}) \cdot \lambda_{\K}((g_i)_{i\in\IN})
 = (f_1f_2\cdots)(g_1g_2\cdots) = f_1g_1f_2g_2f_3g_3 \cdots$.
\item[(b)]
$f_i \in \G(N_{2i}) \subset \G(X_{2i+1})$,
$g_i \in \G(L_{2i-1}) \subset \G(X_{2i-1}) 
	\subset \G(X_{2i+2})$ for each $i\in\IN$.
\item[(c)]
 $\big(\widehat{e}_X, \widehat{e}_X, f_1, g_1, f_2, g_2, \dots\big)
 	\in \cbox_{i\in\IN} \G(X_i)$ and 
 	$h = p\big(\widehat{e}_X, \widehat{e}_X, f_1, g_1, f_2, g_2, \dots\big)$.
\end{itemize}
Therefore, the required local section of the map $p : \cbox_{i\in\IN} \G(X_i) \to \G_c$
 is defined by 
$$s : \W \to \cbox_{i\in\IN} \G(X_i),\quad
 s(h) = \big(\widehat{e}_X, \widehat{e}_X, f_1, g_1, f_2, g_2, \dots\big).$$
If $G$ is contractible, then we can take $\V = \G$ and $\W = \G_c$. 

(ii) From (i) and Remark~\ref{rem_open}, it follows that $p$ is open at 
	$\boldsymbol{e}=(\widehat e_X, \widehat e_X,\dots)$.
Thus, the assertion follows from Proposition \ref{open then direct limit}.
This completes the proof. 
\end{proof}

\begin{proof}[\bf Proof of Theorem~\ref{thm_loc-homeo}] 
By Proposition~\ref{p_loc-homeo}\,(1) it only remains to determine the topological type of the spaces $C(L_{2i}, G)$ and $\G(L_{2i-1}) \approx C(L_{2i-1},{\rm Fr}_X L_{2i-1}; G,e)$ for a suitable exhausting sequence $(X_i, L_i)_{i\in\IN}$ for $X$.

(1)
If $G$ is non-locally compact, we choose any exhausting sequence $(X_i, L_i)_{i\in\IN}$. If $X$ is not end-discrete, we can find an exhausting sequence $(X_i, L_i)_{i\in\IN}$ such that ${\rm Int}_XL_i$ is infinite for each $i \geq 1$. 
In each case we have $C(L_{2i}, G) \approx_{(\ell)} l_2$ and $\G(L_{2i-1}) \approx_{(\ell)}l_2$ $(i \geq 1)$ by Corollary~\ref{C(X,G) is a mfd}\,(1). 

(2) Since $G$ is locally contractible, it follows that $G$ is a Lie group of dimension $n \geq 1$ (so that $G \approx_\ell \IR^n$) and that $G \approx \IR^n$ if $G$ is contractible. 
(i) By the assumption, we can find a compact open subset $X_1$ of $X$ such that 
$X_1$ is infinite and $X - X_1$ is discrete. 
Thus we have an exhausting sequence $(X_i, L_i)_{i\in\IN}$ such that 
(a) $L_i$'s are disjoint open subsets of $X$,  
(b) $L_1$ is infinite and (c) $L_i$ is a one-point set for any $i\ge2$. 
Then, $\G(L_{1}) \approx_{(\ell)}l_2$ and 
$C(L_{2i}, G) \approx \G(L_{2i+1})  \approx G$ $(i\ge1)$. \\ 
(ii) Choose an exhausting sequence $(X_i, L_i)_{i\in\IN}$ such that each $L_i$ is a one-point set.
Then, we have $\G(L_{2i-1}) \approx C(L_{2i}, G) \approx G$ $(i \geq 1)$.
\end{proof}

\begin{proof}[\bf Proof of Theorem~\ref{thm_loc-homeo_2}] 
(1) Choose any exhausting sequence $(X_i, L_i)_{i\in\IN}$ for $X$. 
Since $G$ is (locally) contractible, so is each $\G(X_i) \approx 
C(X_i, {\rm Fr}_X X_i; G, e)$ by Lemma~\ref{lem_LC-deform}. 
Hence, $\cbox_{i\in\IN} \G(X_i)$ is (locally) contractible from Proposition~\ref{p_LC}. 
By Proposition~\ref{p_loc-homeo}\,(2) the map $p : \cbox_{i\in\IN} \G(X_i) \to \G_c$ admits a (local) section $s$ at $\widehat{e}_X$ 
such that $s(\widehat{e}_X) = (\widehat{e}_X,\widehat{e}_X,\dots)$. 
Thus, the group $\G_c$ is also seen to be (locally) contractible. 

(2) The paracompactness of the space $C_c(X,G)$ follows from  
Proposition~\ref{prop_paracompact} below. 
Thus, the assertions follow from Theorem~\ref{thm_loc-homeo}. 
\end{proof}

Let $V$ be a topological linear space and $(V_i)_{i\in\IN}$ a tower of closed linear subspace of $V$.
Then, they are abelian groups and the direct limit of the tower $(V_i)_{i\in\IN}$ in the category of topological groups is also a topological linear space.
Thus, we have the following as a corollary of Proposition~\ref{p_loc-homeo}\,(2)(ii).

\begin{corollary}\label{it is LF-space}
Suppose $X$ is a locally compact Polish space and 
$(X_i)_{i\in\IN}$ is a sequence  of compact subsets of $X$ such that 
$X_ i \subset \tint_X X_{i+1}$ $(i \in\IN)$ and $X=\bigcup_{i\in\IN}X_i$. 
Then, for any Fr\'echet space $F$ the space $C_c(X,F)$ 
is the LF-space with respect to the tower of Fr\'echet spaces $(C(X_i,{\rm Fr}_X X_i; F,0))_{i\in\IN}$. 
\end{corollary}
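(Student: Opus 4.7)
The plan is to apply Proposition~\ref{p_loc-homeo}\,(2)(ii) with $G := F$ and then promote the conclusion from the category of topological groups to the category of (locally convex) topological linear spaces. The hypothesis of (local) contractibility on $G$ in that proposition is satisfied by any Fr\'echet space via the linear null-homotopy $h_t(x) = (1-t)x$, so $F$ qualifies. For any compact $K \subset X$, restriction defines a topological linear isomorphism $\G(K) = C(X, X \setminus K; F, 0) \to C(K, {\rm Fr}_X K; F, 0)$, with inverse given by extension by $0$ off $K$; this is the identification used implicitly throughout Section~4. Each $C(X_i, {\rm Fr}_X X_i; F, 0)$ is a closed linear subspace of the Fr\'echet space $C(X_i, F)$ and hence is itself Fr\'echet, and compactness of supports forces $C_c(X, F) = \bigcup_i \G(X_i)$.

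There is a mild mismatch between the corollary's hypothesis (compact $X_i$ with $X_i \subset \tint_X X_{i+1}$) and the ``exhausting sequence'' in Proposition~\ref{p_loc-homeo} (where $X_i$ is additionally required to be regular closed). This is routine to absorb: by local compactness of $X$, I would interleave compact regular closed sets $Y_i$ with $X_i \subset \tint_X Y_i \subset Y_i \subset \tint_X X_{i+1}$. The proposition applied to $(Y_i)$ gives that $C_c(X, F)$, in its Whitney topology, is the direct limit of $(\G(Y_i))$ in topological groups. Since $\G(X_i) \subset \G(Y_i) \subset \G(X_{i+1})$ as closed subgroups, the towers $(\G(X_i))$ and $(\G(Y_i))$ are mutually cofinal and yield the same direct limit topology; hence $C_c(X, F)$ is also the direct limit of $(\G(X_i))$ in topological groups.

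The final step is to upgrade this to a direct limit in the category of (locally convex) topological linear spaces, which is the content of the remark preceding the corollary. Each $\G(X_i)$ is a closed linear subspace of the topological linear space $C_c(X, F)$; since any topological linear space topology is in particular a topological group topology, the strongest group topology inducing the given topology on each $\G(X_i)$ a priori dominates the strongest such linear topology, and the cited remark asserts that the former is itself linear, forcing equality. I expect this upgrade from group topology to linear topology---concretely, continuity of scalar multiplication under the direct limit---to be the only genuinely subtle point; everything else is bookkeeping on top of Proposition~\ref{p_loc-homeo}.
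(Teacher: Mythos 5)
Your proposal is correct and takes essentially the same route as the paper, which deduces the corollary from Proposition~\ref{p_loc-homeo}\,(2)(ii) together with the preceding remark that the topological-group direct limit of a tower of closed linear subspaces of a topological linear space is again a topological linear space. Your additional details (contractibility of $F$, the identification $\G(X_i) \approx C(X_i,{\rm Fr}_X X_i; F,0)$, and the interleaving with compact regular closed sets to match the definition of an exhausting sequence) merely make explicit steps the paper leaves implicit.
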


Let $(Y,*)$ be a pointed space.
Similarly to the above, 
 the support of $f \in C(X,Y)$ is defined by 
$\supp(f) = \cl_X \{ x \in X : f(x) \neq \ast\}$ and we define a space $C_c(X,Y)$ as follows:
$$C_c(X,Y) = \{ f \in C(X,G) : \supp(f) \ \text{is compact}\}.$$ 

A space is said to be {\em perfectly paracompact} if it is paracompact and each open subset is of type $F_\sigma$. 
It should be noticed that any subspace of a perfectly paracompact space is perfectly paracompact 
(cf.\ \cite[Theorem 5.1.28]{En}, \cite[Chapter VI\!I\!I, Section 2, 2.5]{Dug}). 
From \cite[Section 4]{Dou} it follows that the small box product of metrizable spaces is perfectly paracompact. 
Since any LF-space is homeomorphic to a small box product of Fr\'echet spaces \cite{Man}, 
it follows that every LF-space is perfectly paracompact. 
Hence, Corollary~\ref{it is LF-space} has the following consequence. 

\begin{proposition}\label{prop_paracompact} 
Suppose $X$ is a locally compact Polish space and $(Y,*)$ is a pointed metrizable space.
Then, $C_c(X,Y)$ is perfectly paracompact.
\end{proposition}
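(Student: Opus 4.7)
The plan is to reduce Proposition~\ref{prop_paracompact} to the Fr\'echet-space case already handled by Corollary~\ref{it is LF-space}, by realising $C_c(X,Y)$ as a topological subspace of $C_c(X,F)$ for a suitably chosen Fr\'echet space $F$. Since perfect paracompactness is hereditary to subspaces (as noted just before the proposition), this reduction will suffice.

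First I would embed $(Y,*)$ into a Fr\'echet space in a base-point preserving way. Fix any compatible metric $d$ on $Y$ and take $F=C_b(Y)$, the Banach space of bounded continuous real-valued functions on $Y$ with the supremum norm. The Kuratowski-type map $\iota:Y\to F$ defined by $\iota(y)(z)=d(y,z)-d(*,z)$ is an isometric embedding satisfying $\iota(*)=0$; any equivalent device, such as the Arens-Eells embedding followed by a translation, would work just as well.

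Next I would check that post-composition $\iota_*:C_c(X,Y)\to C_c(X,F)$, $\iota_*(f)=\iota\circ f$, is a topological embedding. Because $\iota$ is injective with $\iota^{-1}(0)=\{*\}$, one has $\supp(\iota_*f)=\supp(f)$, so $\iota_*$ is well-defined and injective between the small-support subspaces. For every open $\V\subset X\times F$ the identity $\iota_*^{-1}(\langle\V\rangle)=\langle(\id_X\times\iota)^{-1}(\V)\rangle$ gives continuity. Conversely, since $\iota$ is a topological embedding, so is $\id_X\times\iota$, and every open $\U\subset X\times Y$ has the form $(\id_X\times\iota)^{-1}(\V)$ for some open $\V\subset X\times F$; this yields $\iota_*(\langle\U\rangle)=\iota_*(C_c(X,Y))\cap\langle\V\rangle$, so $\iota_*$ is a homeomorphism onto its image.

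Finally, by Corollary~\ref{it is LF-space} the space $C_c(X,F)$ is an LF-space, and the remarks preceding the proposition (combining \cite{Man} with \cite[Section 4]{Dou}) show that every LF-space is perfectly paracompact. Consequently $C_c(X,Y)$, being homeomorphic to a subspace of $C_c(X,F)$, inherits perfect paracompactness. The main point requiring genuine care is the embedding verification in the previous paragraph; the essential observation is that because $Y$ carries the subspace topology from $F$, graph-neighbourhoods in $X\times Y$ are precisely the traces of graph-neighbourhoods in $X\times F$, so the two Whitney topologies match under $\iota_*$.
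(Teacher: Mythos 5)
Your proof is correct and follows essentially the same route as the paper: embed $(Y,*)$ basepoint-preservingly into a Banach space $F$, observe that $C_c(X,Y)$ then sits as a subspace of the LF-space $C_c(X,F)$ given by Corollary~\ref{it is LF-space}, and invoke hereditary perfect paracompactness. The only difference is that you spell out the Kuratowski embedding and the verification that $\iota_*$ is a Whitney-topology embedding preserving supports, details the paper leaves implicit.
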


\begin{proof} 
Let $(N, 0)$ be a Banach space which contains $(Y,*)$ as a subspace. 
By Corollary~\ref{it is LF-space} the space $C_c(X,N)$ is an LF-space, 
and hence it is perfectly paracompact. 
Since $C_c(X,Y)$ is a subspace of $C_c(X,N)$, we have the conclusion. 
\end{proof}

\end{document}